\documentclass[reqno]{amsart}
\usepackage{amssymb,latexsym,amsmath,amsthm,enumerate,amsbsy}
\usepackage[mathscr]{eucal}
\usepackage{framed,color,graphicx}
\usepackage{mathrsfs}
\usepackage[all]{xy}
\usepackage{tikz}
\usepackage{cite}
\usepackage{longtable}
\usepackage{ltxtable}
\usepackage{subcaption}
\usepackage{graphicx}
\usepackage{subfloat}
\usepackage{subfig}
\usepackage{tikz}
\usepackage{caption} 
\usepackage{etoolbox} 
\usepackage{tabularx}
\usepackage{eqparbox}
\usepackage{array}

\usetikzlibrary{positioning,decorations.pathreplacing,patterns,decorations.pathmorphing}
\tikzset{%
element/.style={draw, shape=circle, fill=white, inner sep=1.4pt}
}

\DeclareSymbolFont{bbold}{U}{bbold}{m}{n}
\DeclareSymbolFontAlphabet{\mathbbold}{bbold}

\theoremstyle{plain}
\newtheorem{thm}{Theorem}[section]
\newtheorem{lem}[thm]{Lemma}

\newtheorem{cor}[thm]{Corollary}
\newtheorem{pro}[thm]{Proposition}

\theoremstyle{definition}

\newtheorem{remark}[thm]{Remark}

\newcounter{eqbasis}
\renewcommand{\theeqbasis}{E\arabic{eqbasis}}
\newcommand{\eqtag}{\refstepcounter{eqbasis}(\theeqbasis)}

\newcommand{\eqrefbasis}[1]{\textup{(\ref{#1})}}

\renewcommand{\ge}{\geqslant}

\newcommand{\bp}{\mathbf{p}}
\newcommand{\bq}{\mathbf{q}}

\newcommand{\bu}{\mathbf{u}}
\newcommand{\bv}{\mathbf{v}}
\newcommand{\bw}{\mathbf{w}}

\begin{document}
\title[The finite basis problem for matrix semirings]
{The finite basis problem for matrix semirings over a two-element additively idempotent semiring}

\author{Jun Jiao}
\address{School of Mathematics, Northwest University, Xi'an, 710127, Shaanxi, P.R. China}
\email{jjunjiao@163.com}

\author{Miaomiao Ren}
\address{School of Mathematics, Northwest University, Xi'an, 710127, Shaanxi, P.R. China}
\email{miaomiaoren@yeah.net}

\dedicatory{Dedicated to Professor Mikhail V. Volkov on the occasion of his 70th birthday}

\subjclass[2010]{16Y60, 03C05, 08B05}
\keywords{semiring, matrix, variety, identity, finite basis problem}
\thanks{Miaomiao Ren, corresponding author, is supported by National Natural Science Foundation of China (12371024, 12571020).}

\begin{abstract}
We provide a complete classification of matrix semirings $\mathbf{M}_n(S)$ over two-element additively idempotent semirings $S$
with respect to the finite basis property.
Our main theorem shows that for every integer $n \geq 2$,
the semiring $\mathbf{M}_n(S)$ is finitely based
if and only if $S$ is distinct from a distributive lattice.
\end{abstract}

\maketitle

\section{Introduction}
An \emph{additively idempotent semiring} (abbreviated as ai-semiring)
is an algebraic structure $(S, +, \cdot)$ equipped with two binary operations $+$ and $\cdot$ such that:
\begin{itemize}
\item the additive reduct $(S, +)$ is a commutative idempotent semigroup;
\item the multiplicative reduct $(S, \cdot)$ is a semigroup;
\item the distributive laws hold:
\[
(x+y)z \approx xz + yz, \qquad x(y+z) \approx xy + xz.
\]
\end{itemize}
This class of algebras includes well-known examples such as the Kleene semiring of regular languages~\cite{con},
the max-plus algebra~\cite{aei}, the power semiring of a semigroup~\cite{dgv24},
the endomorphism semiring of a semilattice~\cite{dgv25},
the semiring of all binary relations on a set \cite{dolinka2009}, and distributive lattices~\cite{burris1981}.
These and other similar algebras have found significant applications in several branches of mathematics,
such as algebraic geometry~\cite{cc}, tropical geometry~\cite{ms}, information science~\cite{gl},
theoretical computer science~\cite{go}, and soft constraint solving and programming~\cite{bis2004}.

Let $S$ be an ai-semiring. Define a binary relation $\leq$ on $S$ by
\[
a\leq b\Leftrightarrow a+b=b.
\]
Then $\leq$ is a partial order on $S$, and $(S, \leq)$ becomes an upper semilattice in which
the supremum of any elements $a$ and $b$ is $a+b$.
Consequently, the additive reduct $(S, +)$ is uniquely determined by this semilattice order.
Therefore, it is often convenient to represent the addition operation using the Hasse diagram of $(S, \leq)$.
Moreover, one can easily verify that this semilattice order is compatible with multiplication.
For this reason, an ai-semiring is often called a \emph{semilattice-ordered semigroup}.

Let $n\geq 1$ be an integer,
let $S$ be an ai-semiring, and let $\mathbf{M}_n(S)$ denote the set of all $n \times n$ matrices over $S$.
Then $\mathbf{M}_n(S)$ forms an ai-semiring under the usual matrix addition and multiplication, that is,
\[
A+B = [a_{ij}+b_{ij}]_{n\times n}, \quad A \cdot B = \left[ \sum_{k=1}^n a_{ik} b_{kj} \right]_{n\times n}
\]
for all $A=[a_{ij}]_{n\times n}, B=[b_{ij}]_{n\times n}\in \mathbf{M}_n(S)$.
It is obvious that $S$ is isomorphic to $\mathbf{M}_1(S)$.
For $n\geq 2$, $S$ can be embedded into $\mathbf{M}_n(S)$,
since the mapping
\[
\varphi\colon S \rightarrow \mathbf{M}_n(S),\quad a \mapsto [a]_{n\times n},
\]
where $[a]_{n\times n}$ denotes the constant matrix with all entries equal to $a$,
is an embedding mapping.
Moreover, if $S$ has an additive identity that is also a multiplicative zero,
then we can easily obtain the following infinite strict ascending chain
\[
S\hookrightarrow \mathbf{M}_2(S)\hookrightarrow \mathbf{M}_{3}(S)\hookrightarrow  \mathbf{M}_{4}(S)\hookrightarrow\cdots.
\]


Let $\mathcal{V}$ be a class of algebras.
Then $\mathcal{V}$ is called a \emph{variety} if it is closed under
taking subalgebras, homomorphic images, and arbitrary direct products.
By the celebrated Birkhoff's theorem \cite{birkhoff1935}, $\mathcal{V}$ is a variety if and only if
it is an \emph{equational class}, that is, the class of all algebras satisfying some set of identities.

A variety is \emph{finitely based} if it can be defined by a finite set of identities;
otherwise, it is \emph{nonfinitely based}.
An algebra $A$ is finitely based or nonfinitely based if the variety $\mathsf{V}(A)$
it generates is finitely based or not.
The \emph{finite basis problem} for a class of algebras,
one of the central problems in universal algebra,
concerns the classification of
its members with respect to the property of being finitely based.

A variety is \emph{locally finite} if each of its finitely generated members is finite.
A locally finite variety is \emph{inherently nonfinitely based}
if it is not contained in any finitely based locally finite variety.
Since every finite algebra generates a locally finite variety,
we say that a finite algebra $A$ is inherently nonfinitely based if
the variety $\mathsf{V}(A)$ is inherently nonfinitely based.
It follows immediately that every finite algebra whose variety contains
an inherently nonfinitely based algebra is nonfinitely based.
Consequently, every inherently nonfinitely based algebra must be nonfinitely based.

Over the past two decades,
the finite basis problem for ai-semirings has attracted considerable attention,
resulting in substantial
progress~\cite{dol07, dolinka2009, dgv24, dgv25, gpz05, gv23, gv2501, gv2510, jrz, pas05, rz16, rzw, sr, volkov2024, yr25, yrzs, zrc}.
In particular, Dolinka~\cite{dol07} constructed the first example of a nonfinitely based finite ai-semiring.
Pastijn et al.~\cite{gpz05, pas05} proved that every ai-semiring satisfying the identity $x^2 \approx x$ is finitely based;
Ren et al.~\cite{rz16, rzw} later extended this to ai-semirings satisfying the identity $x^3 \approx x$.
Shao and Ren~\cite{sr} established that every algebra in the variety generated by all two-element ai-semirings is finitely based.
Subsequently,
Jackson et al.~\cite{jrz} and Zhao et al.~\cite{zrc}
provided a complete classification of all three-element ai-semirings with respect to the finite basis property.
Most recently,
Dolinka, Gusev and Volkov~\cite{dgv25, gv2501} settled the finite basis problem for
the endomorphism semirings of finite semilattices.

To the best of our knowledge,
the study of the finite basis problem for matrix semirings was initiated by
Dolinka~\cite{dolinka2009}, who considered the problem for $\mathbf{M}_n(D_2)$ for $n\ge 2$,
where $D_2$ denotes the two-element distributive lattice.
Gusev and Volkov~\cite{gv2510, volkov2024} are currently investigating
the same problem for upper triangular matrix semirings over $D_2$.
Motivated by these works,
we undertake a systematic classification of matrix semirings  $\mathbf{M}_n(S)$,
where $S$ ranges over all two-element ai-semirings.
The main result of this paper provides the following complete characterization.

\begin{thm}\label{main}
Let $S$ be a two-element ai-semiring, and let $n \geq 2$ be an integer.
Then the matrix semiring $\mathbf{M}_n(S)$ is finitely based if and only if $S$ is not a distributive lattice.
\end{thm}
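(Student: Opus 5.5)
We split according to the isomorphism type of $S$. Up to isomorphism there are finitely many two-element ai-semirings: on $\{0,1\}$ with $0<1$ the addition is fixed, and the multiplication is one of the finitely many semigroup operations compatible with the order. Exactly one of them is the distributive lattice $D_2$, with $\cdot=\wedge$. For this case we invoke Dolinka's theorem~\cite{dolinka2009}: $\mathbf{M}_n(D_2)$ is nonfinitely based (indeed inherently nonfinitely based) for every $n\ge 2$. That gives the ``only if'' direction. All remaining work is the ``if'' direction: for each two-element ai-semiring $S\neq D_2$ and each $n\ge2$, we show $\mathbf{M}_n(S)$ is finitely based.

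\textbf{Step 1 (enumeration).} Going through the multiplication tables, the non-lattice cases should fall into three types:
\begin{enumerate}[(i)]
\item $S$ has constant or zero-like multiplication ($xy=0$, $xy=1$, or $ab=0$ unless both are $1$ in a nilpotent pattern);
\item $S$ is left-zero or right-zero multiplicatively ($xy\approx x$ or $xy\approx y$);
\item $S$ is the two-element Boolean-type ring-like semiring with $1\cdot1=1$, $0$ multiplicatively absorbing but additively maximal, or $1+1=1$ with a nontrivial unit. This includes the case $\{0,1\}$ with $0$ as multiplicative identity and $1$ as top.
\end{enumerate}
For each we compute what $\mathbf{M}_n(S)$ looks like.

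\textbf{Step 2 (identifying the varieties).} The strategy is to show that in each non-lattice case $\mathsf{V}(\mathbf{M}_n(S))$ coincides with a variety already known to be finitely based, or is described directly by finitely many identities. Some illustrative computations:
\begin{itemize}
\item If $xy\approx x$ in $S$, then $(AB)_{ij}=\sum_k a_{ik}$. So $AB$ depends only on the row sums of $A$, and identities such as $xyz\approx xz$ and $x(y+z)\approx xy$ plausibly hold.
\item If $xy\approx 0$ (bottom) or $xy\approx 1$ (top), then $\mathbf{M}_n(S)$ is multiplicatively constant or absorbing. Its variety is generated by small ai-semirings.
\item When $S$ has multiplicative structure where the top is absorbing, the matrix product computes ``existence of a path avoiding 0''. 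This yields identities like $x^2\approx x^3$ or $xyx\approx\dots$.
\end{itemize}
In each case the aim is either to prove that $\mathsf{V}(\mathbf{M}_n(S))$ lies in the variety generated by all two-element ai-semirings, which is finitely based with all subvarieties finitely based by Shao--Ren~\cite{sr}, or that it satisfies $x^3\approx x$ (then use~\cite{rz16,rzw}), or $x^2\approx x$ (then use~\cite{gpz05,pas05}).

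\textbf{Step 3 (the hard case).} The main obstacle is any $S\neq D_2$ for which $\mathbf{M}_n(S)$ is not in the Shao--Ren variety and is not idempotent-like. The candidate is the semiring where $1$ is a multiplicative zero and additive top while $0$ is the identity; there matrices behave like boolean matrices with complemented semantics. Here I would construct an explicit finite identity system $\Sigma$ and prove completeness. Via a normal form for ai-semiring terms (finite sums of words), I would characterize when $\mathbf{M}_n(S)\models u\approx v$ by combinatorial invariants of the words in $u$ and $v$, such as content, first letter, last letter, and length thresholds. I would then show that $\Sigma$ derives every identity preserving these invariants. This relies on the fact that $\mathbf{M}_n(S)$ has bounded index and period, making the invariants finite and independent of $n$. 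If this fails, the fallback is to show the variety is generated by a finite subdirect product of three-element ai-semirings and appeal to the classification in~\cite{jrz,zrc}.
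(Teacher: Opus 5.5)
\textbf{Verdict: the proposal has a genuine gap.} The ``if'' direction is not proved in the three cases that carry the content: $L_2$, $R_2$ and $M_2$. Your ``only if'' direction (via Dolinka) and your handling of $N_2$, $T_2$ are fine. Products in $\mathbf{M}_n(N_2)$ and $\mathbf{M}_n(T_2)$ are constant, so these are just $N_2^{n^2}$ and $T_2^{n^2}$. Step~1 should simply be the explicit list $L_2,R_2,N_2,T_2,M_2,D_2$; as written, your item (i) includes the meet multiplication of $D_2$.

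\textbf{$L_2$ and $R_2$.} For $L_2$ (and dually $R_2$), none of your three reduction targets applies. The identity $x+yz\approx x+yz+xz$ holds in all six two-element ai-semirings but fails in $\mathbf{M}_n(L_2)$: take $x=I_n$, $y=z=O_n$, and the left side is $I_n$ while the right side is $J_n$. So $\mathbf{M}_n(L_2)$ lies outside the Shao--Ren variety. Also $I_n^2=I_n^3=J_n\neq I_n$, which rules out $x^2\approx x$ and $x^3\approx x$. You therefore need an explicit basis and a completeness proof.
\begin{itemize}
\item Your $x(y+z)\approx xy$ is equivalent to $xy\approx xz$.
\item You also need $x+x^2\approx x^2$, which is independent of it: $N_2$ satisfies $xy\approx xz$ but not $x+x^2\approx x^2$.
\item The key step is showing that every nontrivial identity $\bu\approx\bu+\bq$ of $\mathbf{M}_n(L_2)$ has some $\bu_j$ with $\ell(\bu_j)\geq 2$ and $h(\bu_j)=h(\bq)$. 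For $\ell(\bq)\geq 2$ this uses the substitution $h(\bq)\mapsto I_n$, all other letters $\mapsto O_n$.
\item After that, $xy\approx xz$ and $x+x^2\approx x^2$ derive the identity.
\end{itemize}

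\textbf{$M_2$.} Step~3 is a research program rather than an argument, and its guiding intuition is wrong. Since $xy=x+y$ in $M_2$, we have $(AB)_{ij}=\sum_k a_{ik}+\sum_k b_{kj}$. A product therefore records only the row supports of the left factor and the column supports of the right factor; the ``path'' semantics belongs to $D_2$. This structure is exactly what yields the paper's basis: $xy\approx xy+x$, $xy\approx xy+y$ and $x_1x_2+x_3x_4\approx x_1x_2+x_3x_4+x_1x_4$.

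The invariants you propose cannot characterize the identities. Consider $xy+yz\approx xy+yz+xyz$:
\begin{itemize}
\item both sides have the same content, first letters and last letters;
\item all words have length $\geq 2$;
\item even $c(p(\cdot))$ and $c(s(\cdot))$ match;
\item yet it fails in $\mathbf{M}_2(M_2)$ under $x=z=O$, $y=A$, where the left side is $Z$ and the right side is $F$.
\end{itemize}
What is needed is the extra condition, obtained from the substitution $x_2\mapsto A$ in $\mathbf{SR}_6$, that the middle letter of a length-$3$ word $\bq$ occurs in an interior position of some $\bu_i$.

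Independence of $n$ also needs proof. The paper checks the basis in $\mathbf{M}_n(M_2)$ directly and constructs a special embedding $\mathbf{M}_2(M_2)\hookrightarrow\mathbf{M}_n(M_2)$. Naive block-diagonal padding does not give a subsemiring, because $0$ is the multiplicative identity of $M_2$, not a zero.

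\textbf{Your fallback.} It does not work as stated, for two reasons.
\begin{itemize}
\item A join of finitely based varieties need not be finitely based. So the three-element classification cannot be applied to a generating family; Shao--Ren is usable only because it covers all subvarieties.
\item The three-element subsemirings $S_{54},S_{57},S_{60}$ of $\mathbf{SR}_6$ all satisfy $xy+yz\approx xy+yz+xyz$, so they do not even generate $\mathsf{V}(\mathbf{M}_2(M_2))$.
\end{itemize}
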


For the proof,
we first note that the case where $S$ is a distributive lattice follows from Dolinka's work.
Specifically,
\cite[Theorem B]{dolinka2009} states that the semiring $\mathcal{R}_2$ of all binary relations on a two-element set is inherently nonfinitely based.
Since $\mathcal{R}_2 $ is isomorphic to $\mathbf{M}_2(D_2)$,
it follows immediately that $\mathbf{M}_2(D_2)$ inherits this property.
Furthermore, because $\mathbf{M}_n(D_2)$ contains a copy of $\mathbf{M}_2(D_2)$ for all $n \geq 2$,
the same property extends to $\mathbf{M}_n(D_2)$; this result is explicitly stated in \cite[Corollary 6.2]{dolinka2009}.
(We note that $D_2$ is also denoted $\mathbb{B}_2$ and called the two-element Boolean semiring; our notation follows \cite{sr}.)

Therefore, to establish Theorem~\ref{main}, it remains to prove the converse:
that the matrix semiring $\mathbf{M}_n(S)$ is finitely based
for every two-element ai-semiring $S$ distinct from a distributive lattice and every $n \geq 2$.
The necessary preliminaries are collected in Section 2, and the proof is carried out in Sections 3 and 4.

\section{Preliminaries}

Let $X$ be a countably infinite set of variables, let $X^+$ denote the free semigroup over $X$,
and let $X^*$ denote the free monoid over $X$.
Due to distributivity, every ai-semiring term over $X$ can be expressed as a finite sum of words from $X^+$.
An ai-semiring identity over $X$ is an
expression of the form
\[
\bu\approx \bv,
\]
where $\bu$ and $\bv$ are ai-semiring terms over $X$.
By \cite[Theorem 2.5]{kp},
the ai-semiring $(P_f(X^+), \cup, \cdot)$ of all non-empty finite subsets of $X^+$ is free
in the variety $\mathbf{AI}$ of all ai-semirings on $X$.
So we sometimes write
\[
\{\bu_i \mid 1 \leq i \leq k\}\approx \{\bv_j \mid 1 \leq j \leq \ell\}
\]
for the ai-semiring identity
\[
\bu_1+\cdots+\bu_k\approx \bv_1+\cdots+\bv_\ell.
\]
For an ai-semiring $S$ and an ai-semiring identity $\bu\approx \bv$,
we say that $S$ \emph{satisfies} $\bu\approx \bv$ (or $\bu\approx \bv$ \emph{holds} in $S$)
if $\varphi(\bu)=\varphi(\bv)$
for every semiring homomorphism $\varphi\colon P_f(X^+) \to S$.
Note that such homomorphism $\varphi$ is uniquely determined
by its values on $X$, since $P_f(X^+)$ is generated by $X$.

Suppose that $\Sigma$ is a set of ai-semiring identities containing the defining identities of the variety $\mathbf{AI}$.
Let $\bu \approx \bv$ be an ai-semiring identity with
\[
\bu=\bu_1+\cdots+\bu_k \quad\textrm{and}\quad \bv=\bv_1+\cdots+\bv_\ell,
\]
where $\bu_i, \bv_j \in X^+$ for $1 \leq i \leq k$ and $1 \leq j \leq \ell$.
One can easily check that the ai-semiring variety defined by $\bu \approx \bv$
coincides with the variety defined by the simpler identities $\bu \approx \bu+\bv_j$ and $\bv \approx \bv+\bu_i$
for all $1 \leq i \leq k$ and $1 \leq j \leq \ell$.
Therefore, to prove that $\bu \approx \bv$ is derivable from $\Sigma$,
it is enough to derive each identity $\bu \approx \bu+\bv_j$ and $\bv \approx \bv+\bu_i$ from $\Sigma$.
This reduction will be used in the subsequent sections.

Next, we introduce some notation.
Let $\bw$ be a word in $X^+$ and $x$ a letter in $X$. Then
\begin{itemize}
\item $h(\bw)$ denotes the first variable that occurs in $\bw$;

\item $t(\bw)$ denotes the last variable that occurs in $\bw$;

\item $c(\bw)$ denotes the set of variables that occur in $\bw$;

\item $\ell(\bw)$ denotes
the number of variables occurring in $\bw$ counting multiplicities;

\item $L_{\geq 2}(\bu)$ denotes the set of $\bu_i \in \bu$ such that $\ell(\bu_i)\geq 2$;

\item $p(\bw)$ denotes the word obtained from $\bw$ by deleting its tail,
that is, $\bw=p(\bw)t(\bw)$;

\item $s(\bw)$ denotes the word obtained from $\bw$ by deleting its head,
that is, $\bw=h(\bw)s(\bw)$.
\end{itemize}

Let $\bu$ be an ai-semiring term such that $\bu=\bu_1+\bu_2+\cdots+\bu_n$, where
$\bu_i \in X^+$, $1\leq i \leq n$. Then
\begin{itemize}
\item $h(\bu)$ denotes the set $\{h(\bu_i) \mid 1\leq i \leq n\}$;

\item $t(\bu)$ denotes the set $\{t(\bu_i) \mid 1\leq i \leq n\}$;

\item $c(\bu)$ denotes the set of variables that occur in $\bu$ and so
\[
c(\bu)=\bigcup_{1\leq i \leq n} c(\bu_i).
\]
\end{itemize}

Up to isomorphism, there are exactly six ai-semirings of order two (see \cite{sr}),
which are denoted by $L_2$, $R_2$, $N_2$, $T_2$, $M_2$ and $D_2$.
For each of these semirings the carrier set is $\{0,1\}$, their addition and multiplication tables,
along with the equational bases, are presented in Table~\ref{tab:2-element-ai-semirings}.

%

\begin{table}[htbp]
\caption{The 2-element ai-semirings}\label{tab:2-element-ai-semirings}
\setlength{\tabcolsep}{10pt}
\begin{tabular}{cccc}
\hline
Semiring & $+$ & $\cdot$ & Equational basis \\
\hline
$L_2$ &
\begin{tabular}{cc}
0 & 1 \\
1 & 1 \\
\end{tabular} &
\begin{tabular}{cc}
0 & 0 \\
1 & 1 \\
\end{tabular} &
\begin{tabular}{r}
$xy \approx x$\quad\eqtag\label{eq:L2}
\end{tabular}
\\
\hline
$R_2$ &
\begin{tabular}{cc}
0 & 1 \\
1 & 1 \\
\end{tabular} &
\begin{tabular}{cc}
0 & 1 \\
0 & 1 \\
\end{tabular} &
\begin{tabular}{r}
$xy \approx y$\quad\eqtag\label{eq:R2}
\end{tabular}
\\
\hline
$N_2$ &
\begin{tabular}{cc}
0 & 1 \\
1 & 1 \\
\end{tabular} &
\begin{tabular}{cc}
0 & 0 \\
0 & 0 \\
\end{tabular} &
\begin{tabular}{r}
$x_1x_2 \approx y_1y_2$, \quad \eqtag \label{eq:N21}\\
$x + x^2 \approx x$      \quad \eqtag \label{eq:N22}
\end{tabular} \\
\hline
$T_2$ &
\begin{tabular}{cc}
0 & 1 \\
1 & 1 \\
\end{tabular} &
\begin{tabular}{cc}
1 & 1 \\
1 & 1 \\
\end{tabular} &
\begin{tabular}{r}
$x_1x_2 \approx y_1y_2$, \quad  \eqref{eq:N21}\\
$x + x^2 \approx x^2$~\quad \eqtag \label{eq:T22}
\end{tabular} \\
\hline
$M_2$ &
\begin{tabular}{cc}
0 & 1 \\
1 & 1 \\
\end{tabular} &
\begin{tabular}{cc}
0 & 1 \\
1 & 1 \\
\end{tabular} &
\begin{tabular}{r}
$x+y \approx xy$\quad\eqtag\label{eq:M2}
\end{tabular}
\\
\hline
$D_2$ &
\begin{tabular}{cc}
0 & 1 \\
1 & 1 \\
\end{tabular} &
\begin{tabular}{cc}
0 & 0 \\
0 & 1 \\
\end{tabular} &
\begin{tabular}{c}
$x^2 \approx x$, $xy \approx yx$, \\
$x + xy \approx x$
\end{tabular} \\
\hline
\end{tabular}
\end{table}

The following result, stated as \cite[Lemma 1.1]{sr},
completely solves the equational problem for all two-element ai-semirings.
We shall use this fact without further reference.

\begin{lem}\label{nlemma1}
Let $\bu\approx \bu+\bq$ be a nontrivial ai-semiring identity such that
$\bu=\bu_1+\cdots+\bu_n$, where $\bu_i, \bq\in X^+$, $1\leq i \leq n$. Then
\begin{itemize}
\item[$(1)$] $\bu\approx \bu+\bq$ holds in $L_2$ if and only if $h(\bq)=h(\bu_i)$ for some $\bu_i \in \bu$.

\item[$(2)$] $\bu\approx \bu+\bq$ holds in $R_2$ if and only if $t(\bq)=t(\bu_i)$ for some $\bu_i \in \bu$.

\item[$(3)$] $\bu\approx \bu+\bq$ holds in $N_2$ if and only if $\ell(\bq)\geq 2$.

\item[$(4)$] $\bu\approx \bu+\bq$ holds in $T_2$ if and only if $\ell(\bu_i)\geq 2$ for some $\bu_i \in \bu$.

\item[$(5)$] $\bu\approx \bu+\bq$ holds in $M_2$ if and only if $c(\bq) \subseteq \bigcup_{i=1}^n c(\bu_i)$.

\item[$(6)$] $\bu\approx \bu+\bq$ holds in $D_2$ if and only if $c(\bq)\supseteq c(\bu_i)$ for some $\bu_i \in \bu$.
\end{itemize}
\end{lem}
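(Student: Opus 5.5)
The proof checks each of the six two-element semirings separately. For each $S$, an identity $\bu\approx\bu+\bq$ holds in $S$ if and only if $\varphi(\bq)\le\varphi(\bu)$ for every assignment $\varphi\colon X\to\{0,1\}$, where $0<1$ in the additive order. By distributivity, $\varphi(\bu)=\max_i\varphi(\bu_i)$. So in every case it suffices to determine, for each word $\bw\in X^+$, exactly which assignments send $\bw$ to $1$, and then compare with the stated syntactic condition.

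For $L_2$ the multiplication is $ab=a$, so $\varphi(\bw)=\varphi(h(\bw))$. Hence the identity holds if and only if, for every $\varphi$ with $\varphi(h(\bq))=1$, some $\bu_i$ has $\varphi(h(\bu_i))=1$. Sufficiency of the condition is immediate. For necessity, if $h(\bq)\notin h(\bu)$, set $h(\bq)\mapsto 1$ and every other variable $\mapsto 0$; this gives a violating assignment. The case $R_2$ is the mirror image, using $t$ in place of $h$.

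For $N_2$ every product is $0$. So $\varphi(\bw)=0$ whenever $\ell(\bw)\ge2$, and $\varphi(x)$ is arbitrary for a single letter $x$. If $\ell(\bq)\ge2$ the identity holds trivially, since $\varphi(\bq)=0$. If $\bq=x$ is a letter, we need an assignment with $x\mapsto1$ and $\bu\mapsto0$. Setting all variables other than $x$ to $0$ kills every word of $\bu$ except possibly the letter $x$ itself. Since the identity is nontrivial, $\bq\notin\bu$, so that letter is not a summand, and this assignment works.

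For $T_2$ every product is $1$, so $\varphi(\bw)=1$ whenever $\ell(\bw)\ge2$. If some $\ell(\bu_i)\ge2$, then $\varphi(\bu)=1$ always and the identity holds. Otherwise all $\bu_i$ are letters. If $\bq$ is a word of length at least $2$, assign $0$ to all variables; then $\varphi(\bu)=0$ and $\varphi(\bq)=1$. If $\bq$ is a letter not among the $\bu_i$, assign $1$ to $\bq$ and $0$ to everything else.

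For $M_2$ the multiplication is $\max$, so $\varphi(\bw)=1$ if and only if some variable of $c(\bw)$ is sent to $1$. If $c(\bq)\subseteq c(\bu)$, then any variable of $\bq$ sent to $1$ also lies in some $\bu_i$, which forces $\varphi(\bu)=1$. Conversely, if some $x\in c(\bq)\setminus c(\bu)$, take the assignment $x\mapsto1$ and all other variables $\mapsto0$.

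For $D_2$ the multiplication is $\min$, so $\varphi(\bw)=1$ if and only if $c(\bw)\subseteq\varphi^{-1}(1)$. Suppose $c(\bu_i)\subseteq c(\bq)$ for some $i$. Then whenever $\varphi(\bq)=1$ we have $c(\bu_i)\subseteq c(\bq)\subseteq\varphi^{-1}(1)$, so $\varphi(\bu_i)=1$. Conversely, take $\varphi$ to be the indicator function of $c(\bq)$. Then $\varphi(\bq)=1$, and $\varphi(\bu_i)=1$ exactly when $c(\bu_i)\subseteq c(\bq)$.

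No step is a real obstacle. The only point needing care is the use of nontriviality in the $N_2$ and $T_2$ cases, which excludes $\bq$ being a letter already occurring as a summand of $\bu$.
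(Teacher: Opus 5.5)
Your proof is correct. The paper gives no proof of this lemma to compare against: it quotes it from Shao and Ren, \cite[Lemma 1.1]{sr}, and uses it without further argument. Your argument therefore provides a self-contained verification where the paper relies on the citation.

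The method is the natural one. You determine, for each of the six semirings, exactly which $\{0,1\}$-assignments send a word to $1$: via the head for $L_2$, the tail for $R_2$, the length for $N_2$ and $T_2$, and the content with $\max$ for $M_2$ or $\min$ for $D_2$. You then compare the result with the syntactic condition. You also correctly identify that nontriviality, i.e. $\bq\notin\{\bu_1,\dots,\bu_n\}$, is needed only in parts (3) and (4). Without it, a letter $\bq$ already occurring among the $\bu_i$ would break both equivalences.

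One cosmetic point: $\varphi(\bu)=\max_i\varphi(\bu_i)$ holds because $\varphi$ preserves $+$ and $+$ is $\max$ in all six semirings, not because of distributivity.
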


\begin{table}[htbp]
\caption{Some 3-element ai-semirings}\label{tb:3-element-ai-semirings}
\begin{tabular}{cccc}
\hline
Semiring & $+$ & $\cdot$ & Equational basis \\
\hline
$S_{54}$ &
\begin{tabular}{ccc}
1 & 1 & 3 \\
1 & 2 & 3 \\
3 & 3 & 3
\end{tabular} &
\begin{tabular}{ccc}
3 & 1 & 3 \\
3 & 2 & 3 \\
3 & 3 & 3
\end{tabular} &
\begin{tabular}{c}
$xyz\approx xzy$, $xy^2\approx xy$, \\
$x+zy\approx xy+zy$, $x^2+yx \approx yx$
\end{tabular} \\
\hline
$S_{56}$ &
\begin{tabular}{ccc}
1 & 1 & 3 \\
1 & 2 & 3 \\
3 & 3 & 3
\end{tabular} &
\begin{tabular}{ccc}
3 & 2 & 3 \\
3 & 2 & 3 \\
3 & 2 & 3
\end{tabular} &
\begin{tabular}{c}
$xy\approx zy$, \\
$x+x^2\approx x^2$
\end{tabular} \\
\hline
$S_{57}$ &
\begin{tabular}{ccc}
1 & 1 & 3 \\
1 & 2 & 3 \\
3 & 3 & 3
\end{tabular} &
\begin{tabular}{ccc}
3 & 3 & 3 \\
1 & 2 & 3 \\
3 & 3 & 3
\end{tabular} &
\begin{tabular}{c}
$xyz\approx yxz$, $x^2y\approx xy$, \\
$x+yz\approx yx+yz$, $x^2+xy \approx xy$
\end{tabular} \\
\hline
$S_{58}$ &
\begin{tabular}{ccc}
1 & 1 & 3 \\
1 & 2 & 3 \\
3 & 3 & 3
\end{tabular} &
\begin{tabular}{ccc}
3 & 3 & 3 \\
2 & 2 & 2 \\
3 & 3 & 3
\end{tabular} &
\begin{tabular}{c}
$xy\approx xz$, \\
$x+x^2\approx x^2$
\end{tabular} \\
\hline
$S_{60}$ &
\begin{tabular}{ccc}
1 & 1 & 3 \\
1 & 2 & 3 \\
3 & 3 & 3
\end{tabular} &
\begin{tabular}{ccc}
3 & 3 & 3 \\
3 & 2 & 3 \\
3 & 3 & 3
\end{tabular} &
\begin{tabular}{c}
$x^3\approx x^2$, $x^2+y^2\approx xy$, \\
$x+x^2\approx x^2$
\end{tabular} \\
\hline
\end{tabular}
\end{table}

There are, up to isomorphism, precisely $61$ ai-semirings of order three.
Following~\cite{zrc} we denote them by $S_i$ ($1 \leq i \leq 61$).
Five of these, specifically $S_{54}$, $S_{56}$, $S_{57}$, $S_{58}$ and $S_{60}$,
will be needed in subsequent sections.
The underlying set of each semiring is $\{1, 2, 3\}$.
Their addition and multiplication tables, together with finite equational bases,
are listed in Table~\ref{tb:3-element-ai-semirings}.
We conclude this section by presenting solution to the equational problem
for $S_{54}$, $S_{57}$, and $S_{60}$,
which can be obtained using \cite[Lemma 3.1]{yrzs} and its dual, together with \cite[Lemma 6.1]{yrzs}.

\begin{lem}\label{lemm54}
Let $\bu\approx \bu+\bq$ be a nontrivial ai-semiring identity, where $\bu=\bu_1+\bu_2+\cdots+\bu_n$ with $\bu_i, \bq \in X^+$ for $1\leq i \leq n$.
If $S_{54}$ satisfies $\bu\approx \bu+\bq$, then there exists $\bu_s \in \bu$ such that $\ell(\bu_s)\geq 2$,
$c(s(\bq))\subseteq c(s(\bu))$, and $h(\bq)\in c(\bu)$.
\end{lem}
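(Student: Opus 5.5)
We prove Lemma~\ref{lemm54} directly: for each of the three conclusions, we assume it fails and exhibit an explicit assignment $\varphi\colon X\to S_{54}$ with $\varphi(\bu)\neq\varphi(\bu+\bq)$. We first read off the structure of $S_{54}$ from Table~\ref{tb:3-element-ai-semirings}. Additively, $3$ is absorbing and $1+2=1$, so the order is $2<1<3$. Multiplicatively, $ab=b$ when $b\in\{1,2\}$ and $a\in\{1,2\}$, and $ab=3$ otherwise. Hence for a word $\bw$ and an assignment $\varphi$ we have $\varphi(\bw)=3$ exactly when some letter of $\bw$ is sent to $3$, or when $\ell(\bw)\ge 2$ and some letter of $p(\bw)$ is sent to $3$. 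More simply, $\varphi(\bw)=3$ if and only if either $h(\bw)$ or some letter of $s(\bw)$ is sent to $3$, or $\varphi(t(\bw))=3$. Otherwise $\varphi(\bw)=\varphi(t(\bw))$. Note also that $x\cdot y=3$ whenever $y=3$, and that $\varphi(\bw)=3$ whenever $\ell(\bw)=1$ and the single letter is sent to $3$.

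\emph{Condition $\ell(\bu_s)\ge 2$ for some $s$.} Suppose instead that every $\bu_i$ is a single letter. Since the identity is nontrivial, $\bq\notin\{\bu_1,\dots,\bu_n\}$.
\begin{itemize}
\item If $\ell(\bq)\ge 2$, send every variable to $1$. Then $\varphi(\bu)=1$, while $\varphi(\bq)=1\cdot1=3$. (Note that the diagonal entry gives $1\cdot 1=3$.)
\item If $\bq$ is a letter not among the $\bu_i$, send $\bq\mapsto 3$ and all other variables to $1$. Then $\varphi(\bu)=1\neq 3$.
\end{itemize}
I will recheck the table carefully at this point: it says $1\cdot 1=3$ and $2\cdot 2=2$, so $x^2$ evaluated at $1$ is $3$. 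This is consistent with the basis identity $x^2+yx\approx yx$.

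\emph{Condition $h(\bq)\in c(\bu)$.} Send $h(\bq)\mapsto 1$ and every other variable to $2$. Every $\bu_i$ avoids $h(\bq)$, so it uses only $2$'s and evaluates to $2$ (recall $2\cdot2=2$). Hence $\varphi(\bu)=2$. Meanwhile $\varphi(\bq)$ is either $1$ (if $\bq$ is the single letter $h(\bq)$) or $1\cdot\ldots=3$ (since $1\cdot2=3$ and $1\cdot1=3$). In either case $\varphi(\bq)\neq 2$, and since $1+2=1$ and $3$ is absorbing, $\varphi(\bu+\bq)\neq 2$.

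\emph{Condition $c(s(\bq))\subseteq c(s(\bu))$.} Suppose some $x\in c(s(\bq))$ does not lie in $c(s(\bu))$. Send $x\mapsto 3$ and all other variables to $2$. Then $\varphi(\bq)=3$, because $x$ occurs in a non-head position of $\bq$ and $3$ is absorbing in products. Each $\bu_i$ either avoids $x$ (giving value $2$), or contains $x$ only as its head. In the latter case, if $\ell(\bu_i)\ge 2$ we get $3\cdot y=3$, which is a problem.

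To repair this, I instead use an assignment exploiting the row structure $3\cdot2=2$: from the table, row $3$ is $(3,3,3)$, but column $2$ of the multiplication table is $(1,2,3)$. Re-reading the table as rows indexed by the left factor gives $a\cdot1=3$, $a\cdot 2=a$, and $a\cdot 3=3$. So products ending in $2$ return the head's value, which is a right-annihilating structure, and my earlier reading was transposed. With this corrected reading, $\varphi(\bw)$ depends on $h(\bw)$ and on whether some later letter is sent to $1$ or $3$. I would then redo all three cases using this corrected multiplication rule. For the third case, send $x\mapsto 1$ and all other variables to $2$: every $\bu_i$ then evaluates to the value of its head, lying in $\{1,2\}$, while $\bq$ evaluates to $3$.

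The main obstacle is getting the case analysis right for this third condition, which is the only one genuinely sensitive to word positions. An alternative route that avoids this is to cite \cite[Lemma 3.1]{yrzs} (together with its dual and \cite[Lemma 6.1]{yrzs}), which the paper already points to for the solution of the equational problem for $S_{54}$.
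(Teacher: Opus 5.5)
Your final argument is correct, but it takes a different route from the paper. The paper gives no direct proof. It reads the lemma off the known solution of the equational problem in \cite[Lemma 3.1]{yrzs} (and its dual), which describes the identities of $S_{54}$ completely. You instead prove only the three necessary conditions, each by a substitution into $S_{54}$ that separates $\bu$ from $\bu+\bq$ when the condition fails. Proposition~\ref{lemm24} uses the lemma purely as a list of necessary conditions, so your elementary argument suffices and makes the paper self-contained at this point. The citation gives more: an exact characterization, and uniform treatment of $S_{54}$, $S_{57}$, $S_{60}$. The price is dependence on an external result.

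As written, though, the proposal asserts several false facts about $S_{54}$ and should be rewritten around the correct evaluation rule. With rows indexed by the left factor, the table gives $a\cdot 2=a$ and $a\cdot 1=a\cdot 3=3$ for every $a$. Hence for any assignment $\varphi$ and word $\bw$, $\varphi(\bw)=\varphi(h(\bw))$ if every letter of $s(\bw)$ is sent to $2$, and $\varphi(\bw)=3$ otherwise.

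\begin{itemize}
\item Your opening description (``$ab=b$ for $a,b\in\{1,2\}$'' and the criterion phrased via $t(\bw)$) is wrong and must go.
\item In the case $h(\bq)\notin c(\bu)$, your justification ``$1\cdot 2=3$'' is false, since $1\cdot 2=1$. The conclusion nevertheless survives. With $h(\bq)\mapsto 1$ and all other letters $\mapsto 2$, you get $\varphi(\bu)=2$. Meanwhile $\varphi(\bq)$ is $3$ if $h(\bq)$ recurs in $s(\bq)$ and $1$ otherwise, so $\varphi(\bu+\bq)\in\{1,3\}$.
\item Your two substitutions for the condition $\ell(\bu_s)\ge 2$ are valid under the correct rule.
\item Your repaired substitution for $c(s(\bq))\subseteq c(s(\bu))$ ($x\mapsto 1$, others $\mapsto 2$) is exactly right. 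The letter $x$ can occur in a $\bu_i$ only as its head, so $\varphi(\bu)\in\{1,2\}$, whereas $\varphi(\bq)=3$.
\end{itemize}

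So instead of saying you ``would redo'' the cases, state the rule once, check the three substitutions against it, and discard the abandoned attempt with $x\mapsto 3$.
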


\begin{lem}\label{lemm57}
Let $\bu\approx \bu+\bq$ be a nontrivial ai-semiring identity, where $\bu=\bu_1+\bu_2+\cdots+\bu_n$ with $\bu_i, \bq \in X^+$ for $1\leq i \leq n$.
If $S_{57}$ satisfies $\bu\approx \bu+\bq$, then there exists $\bu_t \in \bu$ such that $\ell(\bu_t)\geq 2$,
$c(p(\bq))\subseteq c(p(\bu))$, and $t(\bq)\in c(\bu)$.
\end{lem}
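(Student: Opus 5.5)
The quickest route is to observe that $S_{57}$ is the dual (opposite) semiring of $S_{54}$: comparing the tables, $a\cdot b$ in $S_{57}$ equals $b\cdot a$ in $S_{54}$, and the additions coincide. An identity therefore holds in $S_{57}$ if and only if its reversal holds in $S_{54}$. Reversing words swaps $h$ with $t$ and $s$ with $p$, so Lemma~\ref{lemm54} translates directly into the present statement.

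To keep the argument self-contained, I would instead prove the three conclusions separately, each by exhibiting an evaluation into $S_{57}$ that separates $\bu$ from $\bu+\bq$. First I record how $S_{57}$ computes.
\begin{itemize}
\item In the additive order, $2<1<3$, and $3$ is absorbing for both operations.
\item The only products different from $3$ are $2\cdot 1=1$ and $2\cdot 2=2$.
\item Consequently, a word $\bw$ evaluates to something other than $3$ exactly when every letter of $p(\bw)$ is sent to $2$ and $t(\bw)$ is sent to $1$ or $2$. In that case the value of $\bw$ is the value of $t(\bw)$.
\end{itemize}
Throughout, $c(p(\bu))$ means $\bigcup_i c(p(\bu_i))$, where $p(\bu_i)$ is empty if $\bu_i$ is a single letter.

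\emph{Step 1: some $\bu_t\in\bu$ has $\ell(\bu_t)\geq 2$.} Suppose instead that every $\bu_i$ is a letter. If $\ell(\bq)\geq 2$, send every variable to $1$. Then $\bu$ evaluates to $1$, while $\bq$ evaluates to $1\cdot 1\cdots=3$, so $\bu+\bq$ evaluates to $3$. If $\bq$ is a single letter, it is not among the $\bu_i$, since the identity is nontrivial. Send $\bq$ to $3$ and all other variables to $2$; then $\bu$ evaluates to $2$ and $\bu+\bq$ to $3$. Either way the identity fails.

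\emph{Step 2: $t(\bq)\in c(\bu)$.} If not, send $t(\bq)$ to $3$ and every other variable to $2$. Each $\bu_i$ avoids $t(\bq)$, so it evaluates to $2$. Meanwhile $\bq$ evaluates to $3$. This contradicts the identity.

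\emph{Step 3: $c(p(\bq))\subseteq c(p(\bu))$.} Let $x\in c(p(\bq))\setminus c(p(\bu))$. Send $x$ to $1$ and every other variable to $2$.
\begin{itemize}
\item In $\bq$, the letter $x$ occurs in a non-final position. It is then followed by some letter, giving a factor $1\cdot 2$ or $1\cdot 1$, both equal to $3$, so $\bq$ evaluates to $3$.
\item In each $\bu_i$, the letter $x$ can occur at most as the last letter. Hence $\bu_i$ evaluates to $1$ or $2$, and $\bu$ evaluates to an element $\le 1$, which is not $3$.
\end{itemize}
Again the identity fails.

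The only delicate point is Step~3. There one must choose the value $1$ for $x$, because it is the unique element that is harmless in last position but produces $3$ in any non-final position. One must also read $c(p(\bu))$ as the union over all summands, with single-letter summands contributing nothing. With that reading, the three steps together give the lemma.
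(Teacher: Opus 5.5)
Your proof is correct. Your opening remark, that $S_{57}$ is the opposite semiring of $S_{54}$ so the statement is the left--right mirror of Lemma~\ref{lemm54}, is exactly how the paper obtains it. The paper gives no argument in the text and simply attributes the lemma to the dual of \cite[Lemma 3.1]{yrzs}.

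Your three separating evaluations are a genuinely different, self-contained route, and they check out against the table. The only products in $S_{57}$ different from $3$ are $2\cdot 1=1$ and $2\cdot 2=2$, so a word avoids $3$ exactly as you describe. Each of your substitutions then sends $\bu$ into $\{1,2\}$ and $\bq$ to $3$.

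The citation route is shorter, but it rests on an external result and leaves the isomorphism $S_{57}\cong S_{54}^{\mathrm{op}}$ for the reader to verify. Since Lemma~\ref{lemm54} is itself only cited, your duality remark alone would inherit that dependence. Your direct route is elementary and proves precisely the necessary conditions used later in Proposition~\ref{lemm24}. It also makes explicit the reading of $c(p(\bu))$ as a union over the summands, with one-letter summands contributing nothing, which is the sense in which Case~2 of that proposition uses it.
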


\begin{lem}\label{lemm60}
Let $\bu\approx \bu+\bq$ be a nontrivial ai-semiring identity, where $\bu=\bu_1+\bu_2+\cdots+\bu_n$ with $\bu_i, \bq \in X^+$ for $1\leq i \leq n$.
If $S_{60}$ satisfies $\bu\approx \bu+\bq$, then $\bu$ and $\bq$ simultaneously satisfy the following conditions$\colon$
\begin{itemize}
\item[$(1)$] There exists $\bu_j \in \bu$ such that $\ell(\bu_j)\geq 2$;

\item[$(2)$] If $\ell(\bq)=1$, then $c(\bq)\subseteq c(\bu)$;

\item[$(3)$] If $\ell(\bq)\geq 2$, then $c(\bq)\subseteq c(L_{\geq2}(\bu))$.
\end{itemize}
\end{lem}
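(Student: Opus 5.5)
The plan is to prove each of the three conditions by contraposition. Each time we exhibit an explicit assignment of the variables into $S_{60}$ under which $\bu$ and $\bu+\bq$ take different values.

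First we record what the tables of $S_{60}$ give. The order is $2<1<3$, with $3$ the top element. The element $3$ is multiplicatively absorbing. The only product not equal to $3$ is $2\cdot 2=2$. Hence, for a word $\bw$ under an assignment $\varphi$:
\begin{itemize}
\item if $\ell(\bw)=1$, then $\varphi(\bw)$ is the value of its single letter;
\item if $\ell(\bw)\geq 2$, then $\varphi(\bw)=2$ when every letter of $\bw$ is sent to $2$, and $\varphi(\bw)=3$ otherwise.
\end{itemize}
A sum equals $3$ as soon as one summand equals $3$. Nontriviality means $\bq\notin\{\bu_1,\ldots,\bu_n\}$.

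For $(1)$, suppose every $\bu_i$ is a single letter. If $\ell(\bq)\geq 2$, send every variable to $1$. Then $\varphi(\bu)=1$, while $\varphi(\bq)=3$ because $1\cdot 1=3$. Thus $\varphi(\bu+\bq)=3\neq 1$. If $\ell(\bq)=1$, then $\bq$ is a letter $x$ different from all the letters $\bu_i$. Send $x$ to $3$ and every other variable to $1$. Then $\varphi(\bu)=1$ and $\varphi(\bu+\bq)=3$.

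For $(2)$, suppose $\bq=x$ with $x\notin c(\bu)$. Send $x$ to $3$ and all other variables to $2$. Every word of $\bu$ uses only letters valued $2$, so $\varphi(\bu)=2$. On the other hand $\varphi(\bu+\bq)=3$.

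For $(3)$, suppose $\ell(\bq)\geq 2$ and there is $x\in c(\bq)\setminus c(L_{\geq2}(\bu))$. Send $x$ to $1$ and all other variables to $2$. Each word in $L_{\geq 2}(\bu)$ avoids $x$, so it evaluates to $2$. Each one-letter word of $\bu$ evaluates to $1$ or $2$. Hence $\varphi(\bu)\in\{1,2\}$. Since $\bq$ has length at least $2$ and contains $x$, which is valued $1$, we get $\varphi(\bq)=3$. So $\varphi(\bu+\bq)=3\neq\varphi(\bu)$.

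There is no real obstacle here. The only point requiring care is the case split in $(1)$ according to $\ell(\bq)$, so that nontriviality is used correctly. The argument for $(3)$ relies on the observation that the value $1$ is harmless on single letters but forces $3$ inside longer words.
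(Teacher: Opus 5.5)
Your proof is correct, but it takes a different route from the paper. The paper gives no argument of its own for this lemma. It obtains it, together with Lemmas~\ref{lemm54} and~\ref{lemm57}, from \cite[Lemma 3.1]{yrzs}, its dual, and \cite[Lemma 6.1]{yrzs}, which it describes as solving the equational problem for these three-element semirings.

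You instead derive the three conditions directly from the Cayley tables of $S_{60}$. You use three facts: $3$ is an absorbing top element, $2\cdot 2=2$ is the only product different from $3$, and therefore a word of length at least $2$ evaluates to $2$ or $3$ according to whether all its letters are sent to $2$. Each of your substitutions checks out: all letters to $1$; $x\mapsto 3$ with the others to $1$ or to $2$; and $x\mapsto 1$ with the others to $2$. You invoke nontriviality exactly where it is needed, in the $\ell(\bq)=1$ subcase of $(1)$; conditions $(2)$ and $(3)$ would hold trivially if $\bq$ occurred in $\bu$.

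Your route is self-contained: it needs nothing beyond the tables. It also proves exactly the one-directional statement that Proposition~\ref{lemm24} later uses. The citation instead gives a fuller description of the identities of $S_{60}$, which is more than this lemma requires and is not needed elsewhere in the paper.
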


\section{Equational basis of matrix semiring over a $2$-element ai-semiring}
In this section, we show that for each $S \in \{L_2, R_2, N_2, T_2\}$,
the varieties $\mathsf{V}(\mathbf{M}_n(S))$ for all $n \geq 2$ coincide and are finitely based.
These results are easy in comparison to those in Section 4.

\begin{pro}\label{pro21}
Let $n\geq 2$ be an integer. Then
$\mathsf{V}(\mathbf{M}_n(L_2))$ is the ai-semiring variety determined by the identities \eqref{eq:T22} and
\begin{align}
&xy\approx xz.    \label{f1}
\end{align}
\end{pro}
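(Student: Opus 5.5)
Write $\mathcal{W}$ for the ai-semiring variety defined by \eqref{eq:T22} and \eqref{f1}. The plan is to show $\mathsf{V}(\mathbf{M}_n(L_2))=\mathcal{W}$ by proving two inclusions.

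\emph{Step 1: $\mathbf{M}_n(L_2)$ satisfies \eqref{eq:T22} and \eqref{f1}.} In $L_2$ we have $ab=a$, so for matrices $A,B$ the product has entries $(AB)_{ij}=\sum_k a_{ik}b_{kj}=\sum_k a_{ik}$. This entry depends only on row $i$ of $A$, so $AB=AC$ for all $A,B,C$, which is \eqref{f1}. Put $r_i=\sum_k a_{ik}$. Then $(A^2)_{ij}=r_i\ge a_{ij}$, so $A+A^2=A^2$, which is \eqref{eq:T22}.

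\emph{Step 2: identify a generator of $\mathcal{W}$ inside $\mathsf{V}(\mathbf{M}_n(L_2))$.} Using \eqref{f1}, every word of length at least $2$ is equal to $xy$ for some fixed letter $y$, so it is determined by its head. Hence every term reduces to the normal form $\sum_{x\in A}x+\sum_{x\in B}xy$, with $A,B$ sets of letters. By \eqref{eq:T22} we get $x\le x^2=xy$, so we may assume $A\cap B=\emptyset$. Therefore an identity $\bu\approx\bu+\bq$ holds in $\mathcal{W}$ exactly when:
\begin{itemize}
\item if $\ell(\bq)\ge2$, then $h(\bq)\in h(L_{\ge2}(\bu))$;
\item if $\bq$ is a letter $x$, then $x\in c(\bu)$ as a single letter, or $x\in h(L_{\ge2}(\bu))$.
\end{itemize}
The comparison semiring is $S_{58}$ from Table~\ref{tb:3-element-ai-semirings}, whose basis is exactly $xy\approx xz$ together with $x+x^2\approx x^2$. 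So $\mathcal{W}=\mathsf{V}(S_{58})$, and this also follows from the normal-form analysis above.

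\emph{Step 3: embed $S_{58}$ into $\mathbf{M}_2(L_2)$.} It suffices to do this for $n=2$, since $\mathbf{M}_2(L_2)$ embeds in $\mathbf{M}_n(L_2)$ by padding. I first look for a copy of the row behaviour. In $\mathbf{M}_2(L_2)$, $AB$ has both columns equal to the row-sum vector of $A$. Using rows $(r_1,r_2)$, the candidate elements are
\[
P=\begin{pmatrix}1&0\\0&0\end{pmatrix},\quad Q=\begin{pmatrix}1&1\\0&0\end{pmatrix},\quad R=\begin{pmatrix}1&1\\1&1\end{pmatrix}.
\]
Checking against the table of $S_{58}$:
\begin{itemize}
\item Element $2$ is the idempotent in the middle of the order, whose products with everything on the right give $2$. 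Take $Q$: $QX=Q$ because row $1$ of $Q$ sums to $1$ and row $2$ to $0$.
\item Element $1$ lies below $2$, with $1\cdot$anything $=3$. Take $P$: $P\le Q$, but $PX=Q\ne R$. So $P$ does not work.
\end{itemize}
The fix is to use a different mapping. Take $1\mapsto\begin{pmatrix}1&0\\0&1\end{pmatrix}$, $2\mapsto\begin{pmatrix}1&1\\0&0\end{pmatrix}$, $3\mapsto R$. Then the sum of the images of $1$ and $2$ is $R=3$, but $S_{58}$ requires $1+2=1$. That choice fails too.

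If no exact embedding of $S_{58}$ turns up, the fallback is to skip the embedding and prove the reverse inclusion directly. Suppose $\bu\approx\bu+\bq$ holds in $\mathbf{M}_2(L_2)$. I then show the normal-form condition from Step 2 by choosing separating assignments:
\begin{itemize}
\item Send every letter of $h(L_{\ge2}(\bu))$ to $R$, and every other letter to the zero matrix or to $P$, adjusting so that single letters of $\bu$ remain controlled.
\item If $\bq$ has length at least $2$ and its head is not in $h(L_{\ge2}(\bu))$, send its head to $Q$-type rows so that the value of $\bq$ has a nonzero second-row sum that $\bu$ lacks.
\end{itemize}
This case analysis is the main obstacle. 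The key fact it relies on is that the value of a product is determined by the row sums of its head matrix, while the value of a single letter is the matrix itself. Having two rows gives enough room to separate ``being a letter of $\bu$'' from ``being a head of a long word of $\bu$''.

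Finally, $\mathsf{V}(\mathbf{M}_n(L_2))$ is independent of $n\ge2$. It contains $\mathsf{V}(\mathbf{M}_2(L_2))$, by the embedding for $n=2$, and it is contained in $\mathcal{W}$ by Step 1. Both ends equal $\mathcal{W}$, so $\mathsf{V}(\mathbf{M}_n(L_2))=\mathcal{W}$ for every $n\ge2$.
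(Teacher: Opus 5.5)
Your proposal has a genuine gap: it never proves the reverse inclusion $\mathcal{W}\subseteq\mathsf{V}(\mathbf{M}_n(L_2))$, i.e.\ that every identity of $\mathbf{M}_n(L_2)$ follows from \eqref{eq:T22} and \eqref{f1}. That inclusion is the substance of the proposition. Step~1 and the normal-form analysis of Step~2 are fine.

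Step~3 is abandoned because you misread the tables of $S_{58}$. The entry $1+2=1$ means $2\le 1$, so the order is the chain $2<1<3$, with $2x=2$ and $1x=3x=3$. With the correct reading, the embedding you were looking for exists: $2\mapsto\left(\begin{smallmatrix}1&1\\0&0\end{smallmatrix}\right)$, $1\mapsto\left(\begin{smallmatrix}1&1\\1&0\end{smallmatrix}\right)$, $3\mapsto J_2$. This works because products in $\mathbf{M}_n(L_2)$ depend only on the row sums of the left factor. Then $\mathsf{V}(S_{58})\subseteq\mathsf{V}(\mathbf{M}_n(L_2))\subseteq\mathcal{W}=\mathsf{V}(S_{58})$ would finish. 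This is a legitimate alternative to the paper's direct derivation, provided you rely on the cited basis of $S_{58}$. Incidentally, $I+Q=\left(\begin{smallmatrix}1&1\\0&1\end{smallmatrix}\right)$, not $R$.

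Your claim that $\mathbf{M}_2(L_2)$ embeds in $\mathbf{M}_n(L_2)$ ``by padding'' is false for zero-padding, because $0$ is not a multiplicative zero of $L_2$ ($1\cdot 0=1$). For $n=3$, the zero-padding of $\left(\begin{smallmatrix}1&0\\0&0\end{smallmatrix}\right)$ squares to a matrix with first row $(1,1,1)$, whereas the padding of its square has first row $(1,1,0)$. Embed $S_{58}$ directly instead, using three matrices:
\begin{itemize}
\item first row all $1$'s and other rows $0$;
\item first row all $1$'s and other rows $(1,0,\dots,0)$;
\item $J_n$.
\end{itemize}

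The fallback does not repair the gap. It is only a sketch, and its key assignment is wrong. The copy of $L_2$ in $\mathbf{M}_n(L_2)$ forces $h(\bq)\in h(\bu)$. So in the critical case ($\ell(\bq)\ge 2$ and $h(\bq)\notin h(L_{\ge2}(\bu))$), $h(\bq)$ occurs in $\bu$ as a single letter.
\begin{itemize}
\item If $h(\bq)\mapsto Q$, then $\varphi(\bq)=Q$ because $QX=Q$, while $\varphi(\bu)\ge Q$, so nothing is separated.
\item More generally, every row that is nonzero in $\varphi(\bq)$ is already nonzero in $\varphi(\bu)\ge\varphi(h(\bq))$. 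So no ``second-row sum that $\bu$ lacks'' can ever arise.
\end{itemize}
What separates is the pattern of entries inside a row, not the row sums. As in the paper, send $h(\bq)$ to $I_n$ and every other letter to $O_n$; this gives $\varphi(\bu)=I_n\neq J_n=\varphi(\bu+\bq)$. Hence $h(\bq)\in h(L_{\ge2}(\bu))$ whenever $\ell(\bq)\ge 2$, while the single-letter case needs only $L_2$. After that, \eqref{f1} and \eqref{eq:T22} derive $\bu\approx\bu+\bq$ immediately.
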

\begin{proof}
We first show that $\mathbf{M}_n(L_2)$ satisfies identities \eqref{eq:T22} and \eqref{f1}.
Let $A = [a_{ij}]$, $B = [b_{ij}]$, $C = [c_{ij}]$ be arbitrary matrices in $\mathbf{M}_n(L_2)$, where all entries belong to $L_2$.
For any indices $1 \leq i, j \leq n$, we have
\[
(AB)_{ij} = \sum_{k=1}^{n} a_{ik}b_{kj} \stackrel{\eqrefbasis{eq:L2}}=
\sum_{k=1}^{n} a_{ik} \stackrel{\eqrefbasis{eq:L2}}= \sum_{k=1}^{n} a_{ik}c_{kj} = (AC)_{ij},
\]
and
\begin{align*}
(A^2)_{ij} = \sum_{k=1}^{n} a_{ik}a_{kj}
&= \left( \sum_{k=1}^{n} a_{ik}a_{kj} \right) + a_{ij}a_{jj} \\
&\stackrel{\eqrefbasis{eq:L2}}= \left( \sum_{k=1}^{n} a_{ik}a_{kj} \right) + a_{ij} \\
&= (A^2)_{ij} + A_{ij} = (A^2 + A)_{ij}.
\end{align*}
This shows that $AB = AC$ and $A^2 = A^2 + A$.
Hence $\mathbf{M}_n(L_2)$ satisfies the identities \eqref{eq:T22} and \eqref{f1}.

Next, we show that every ai-semiring identity of $\mathbf{M}_n(L_2)$
is derivable from \eqref{eq:T22}, \eqref{f1} and the defining identities of $\mathbf{AI}$.
Let $\mathbf{u} \approx \mathbf{u} + \mathbf{q}$ be such a nontrivial identity,
where $\mathbf{u} = \mathbf{u}_1 + \mathbf{u}_2 + \dots + \mathbf{u}_m$ with $\mathbf{u}_i, \mathbf{q} \in X^+$ for $1 \leq i \leq m$.
Since $L_2$ embeds into $\mathbf{M}_n(L_2)$,
we have that $h(\mathbf{q}) = h(\mathbf{u}_i)$ for some $\mathbf{u}_i\in \mathbf{u}$.
We consider two cases.

\noindent \textbf{Case 1:} $\ell(\mathbf{q}) = 1$.
Then $\mathbf{u}_i = h(\mathbf{q}) s(\mathbf{u}_i) = \mathbf{q} s(\mathbf{u}_i)$.
Since the identity is nontrivial,
it follows that $s(\mathbf{u}_i)$ is nonempty. Consequently,
\[
\mathbf{u} \approx \mathbf{u} + \mathbf{u}_i \approx \mathbf{u} + \mathbf{q} s(\mathbf{u}_i)
\stackrel{\eqref{f1}}{\approx} \mathbf{u} + \mathbf{q}^2
\stackrel{\eqref{eq:T22}}{\approx} \mathbf{u} + \mathbf{q}^2 + \mathbf{q}.
\]

\noindent \textbf{Case 2:} $\ell(\mathbf{q}) \geq 2$.
Suppose, for contradiction, that $h(\mathbf{q}) \cap h(L_{\geq 2}(\mathbf{u})) = \emptyset$.
If $h(\mathbf{u}_i) = h(\mathbf{q})$ for some $\mathbf{u}_i\in \mathbf{u}$,
then $\mathbf{u}_i$ must be a single letter.
Define a substitution $\varphi \colon X \to \mathbf{M}_n(L_2)$ by
\[
\varphi(x) =
\begin{cases}
I_n, & \text{if } x = h(\mathbf{q}), \\
O_n, & \text{otherwise},
\end{cases}
\]
where $I_n$ is the $n\times n$ diagonal matrix with $1$ on the main diagonal and $0$ elsewhere,
and $O_n$ denotes the $n\times n$ constant matrix with all entries equal to $0$.
Then $\varphi(\mathbf{u}) = I_n$ while $\varphi(\mathbf{q}) = J_n$,
where $J_n$ denotes the $n\times n$ constant matrix with all entries equal to $1$.
Note that $I_n \neq J_n$, since $n \geq 2$.
This implies that $\varphi(\mathbf{u}) \neq \varphi(\mathbf{u} + \mathbf{q})$, a contradiction.
Hence there exists a word $\mathbf{u}_j \in L_{\geq 2}(\mathbf{u})$ such that $\mathbf{u}_j = h(\mathbf{q}) s(\mathbf{u}_j)$, and therefore
\[
\mathbf{u} \approx \mathbf{u} + \mathbf{u}_j \approx \mathbf{u} + h(\mathbf{q}) s(\mathbf{u}_j)
\stackrel{\eqref{f1}}{\approx} \mathbf{u} + h(\mathbf{q}) s(\mathbf{q}) \approx \mathbf{u} + \mathbf{q}.
\]

In both cases the identity $\mathbf{u} \approx \mathbf{u} + \mathbf{q}$ is derivable, which completes the proof.
\end{proof}

\begin{remark}\label{rem1}
The variety $\mathsf{V}(L_2)$ is a proper subvariety of $\mathsf{V}(\mathbf{M}_2(L_2))$,
since the identity $xy\approx x$ is satisfied by $L_2$, but fails in $\mathbf{M}_2(L_2)$.
Moreover, using the equational basis for $S_{58}$ listed in Table~\ref{tb:3-element-ai-semirings},
one finds that
\[
\mathsf{V}(\mathbf{M}_n(L_2)) = \mathsf{V}(S_{58})
\]
for all $n \geq 2$.
\end{remark}

\begin{pro}\label{pro22}
Let $n\geq 2$ be an integer. Then
$\mathsf{V}(\mathbf{M}_n(R_2))$ coincides with $\mathsf{V}(S_{56})$, properly contains $\mathsf{V}(R_2)$,
and is the ai-semiring variety determined by the identities \eqref{eq:T22} and
\begin{align}
&xy\approx zy.    \label{f3}
\end{align}
\end{pro}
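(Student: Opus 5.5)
This is the mirror image of Proposition~\ref{pro21}, with the roles of heads and tails interchanged, and I would prove it the same way.

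\emph{Soundness.} First check that $\mathbf{M}_n(R_2)$ satisfies \eqref{eq:T22} and \eqref{f3}. In $R_2$ we have $ab=b$, so
\[
(AB)_{ij}=\sum_k b_{kj}=(CB)_{ij},
\]
which gives \eqref{f3}. Also
\[
(A^2)_{ij}=\sum_k a_{kj}\ \ge\ a_{ij}a_{jj}=a_{jj}.
\]
This is not yet $\ge a_{ij}$. Better: $\sum_k a_{ik}a_{kj}$ contains the term $k=i$, namely $a_{ii}a_{ij}=a_{ij}$. Hence $A^2+A=A^2$.

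\emph{Completeness.} Let $\bu\approx\bu+\bq$ be a nontrivial identity of $\mathbf{M}_n(R_2)$. Since $R_2$ embeds via constant matrices, Lemma~\ref{nlemma1}(2) gives some $\bu_i$ with $t(\bu_i)=t(\bq)$.

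\textbf{Case 1: $\ell(\bq)=1$.} Then $\bu_i=p(\bu_i)\bq$ with $p(\bu_i)$ nonempty, by nontriviality. Using \eqref{f3} and then \eqref{eq:T22},
\[
\bu\approx\bu+p(\bu_i)\bq\approx\bu+\bq^2\approx\bu+\bq^2+\bq .
\]

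\textbf{Case 2: $\ell(\bq)\ge 2$.} I need some $\bu_j\in L_{\geq 2}(\bu)$ with $t(\bu_j)=t(\bq)$. Given that, \eqref{f3} gives
\[
\bu\approx\bu+p(\bu_j)t(\bq)\approx\bu+p(\bq)t(\bq)=\bu+\bq .
\]
If no such $\bu_j$ exists, substitute $I_n$ for $t(\bq)$ and $O_n$ for every other variable. The only words of $\bu$ ending in $t(\bq)$ are then single letters, so:
\begin{itemize}
\item each such single letter evaluates to $I_n$;
\item every other word ends in $O_n$, and any product $XO_n$ has $j$-th column $\sum_k 0=0$, so it evaluates to $O_n$;
\item $\bq$ ends in $t(\bq)$ with at least one more letter, and $XI_n$ has $(i,j)$ entry $\sum_k x_{ik}\delta_{kj}$, which in $R_2$ equals $\sum_k\delta_{kj}=1$, so $\varphi(\bq)=J_n$.
\end{itemize}
Thus $\varphi(\bu)=I_n\ne J_n=\varphi(\bu+\bq)$, a contradiction.

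\emph{$\mathsf{V}(S_{56})$ and the comparison with $\mathsf{V}(R_2)$.} The basis of $S_{56}$ in Table~\ref{tb:3-element-ai-semirings} is exactly \eqref{f3} together with \eqref{eq:T22}, so the two varieties coincide. For properness: $xy\approx y$ holds in $R_2$, but it fails in $\mathbf{M}_2(R_2)$ since $I_2\cdot I_2=J_2\ne I_2$. Conversely, $\mathsf{V}(R_2)\subseteq\mathsf{V}(\mathbf{M}_n(R_2))$ by the embedding of $R_2$.

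The main obstacle is the separating substitution in Case 2, namely checking carefully that left-multiplying by $I_n$ yields $J_n$ in $R_2$.
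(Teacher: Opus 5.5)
Your proposal is correct and follows the paper's route. The paper simply says to dualize Proposition~\ref{pro21} and Remark~\ref{rem1}, and you carry out exactly that dualization: the same case split on $\ell(\bq)$, the same $I_n$/$O_n$ separating substitution with $t(\bq)$ in place of $h(\bq)$, and reading off $\mathsf{V}(S_{56})$ from its basis in Table~\ref{tb:3-element-ai-semirings}. Your initial misstep in checking \eqref{eq:T22} is harmless, since you correct it using the $k=i$ term $a_{ii}a_{ij}=a_{ij}$.
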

\begin{proof}
Observe that $R_2$ and $L_2$ have dual multiplications.
The remaining argument parallels those of Proposition~\ref{pro21} and Remark~\ref{rem1}.
\end{proof}

\begin{pro}\label{pro233}
Let $n\geq 1$ be an integer. Then
$\mathsf{V}(\mathbf{M}_n(N_2))$ is the ai-semiring variety determined by the identities \eqref{eq:N21} and \eqref{eq:N22}.
\end{pro}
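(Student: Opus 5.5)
The proof will show two inclusions: $\mathbf{M}_n(N_2)$ satisfies \eqref{eq:N21} and \eqref{eq:N22}, and conversely every identity of $\mathbf{M}_n(N_2)$ already holds in $N_2$. Since the identities \eqref{eq:N21} and \eqref{eq:N22} form an equational basis of $N_2$ (Table~\ref{tab:2-element-ai-semirings}), together these give $\mathsf{V}(\mathbf{M}_n(N_2))=\mathsf{V}(N_2)$ with this basis.

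First I verify the identities in $\mathbf{M}_n(N_2)$. In $N_2$ every product is $0$. Hence for $A,B\in\mathbf{M}_n(N_2)$ each entry $(AB)_{ij}=\sum_k a_{ik}b_{kj}$ is $0$, so $AB=O_n$ for all $A,B$. This gives \eqref{eq:N21} at once, since both sides equal $O_n$. For \eqref{eq:N22}, note that $A+A^2=A+O_n=A$, because $0$ is the additive identity of $N_2$ (addition is $\max$ with $0<1$). Thus $\mathsf{V}(\mathbf{M}_n(N_2))$ is contained in the variety defined by \eqref{eq:N21} and \eqref{eq:N22}.

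For the reverse inclusion, I use the embedding $a\mapsto[a]_{n\times n}$ from the introduction. It shows that $N_2$ is a subalgebra of $\mathbf{M}_n(N_2)$, so $\mathsf{V}(N_2)\subseteq\mathsf{V}(\mathbf{M}_n(N_2))$. For $n=1$ the two algebras are isomorphic, so there is nothing to prove. As a check, this agrees with Lemma~\ref{nlemma1}(3): an identity $\bu\approx\bu+\bq$ holds in $N_2$ iff $\ell(\bq)\ge2$. The same condition is forced in $\mathbf{M}_n(N_2)$ by the embedding. It is also sufficient there: by the same computation every word of length at least $2$ evaluates to $O_n$, and adding $O_n$ changes nothing. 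So the identities of the two algebras coincide.

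There is no real obstacle here. The only point needing care is that the zero matrix $O_n$ is both the additive identity and the value of every product of length at least $2$; this follows immediately from the multiplication table of $N_2$.
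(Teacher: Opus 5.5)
Your proposal is correct and follows essentially the same route as the paper: you verify \eqref{eq:N21} and \eqref{eq:N22} in $\mathbf{M}_n(N_2)$, then use the embedding of $N_2$ into $\mathbf{M}_n(N_2)$ together with the known basis of $\mathsf{V}(N_2)$. Your observation that every product equals the zero matrix $O_n$, which is also the additive identity, is a slightly more direct version of the paper's entrywise computation.
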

\begin{proof}
Let $A = [a_{ij}]$, $B = [b_{ij}]$, $C = [c_{ij}]$, $D = [d_{ij}]$ be arbitrary matrices in $\mathbf{M}_n(N_2)$.
For any $1 \leq i, j \leq n$, we have
\[
(AB)_{ij} = \sum_{k=1}^{n} a_{ik}b_{kj} \stackrel{\eqrefbasis{eq:N21}}= \sum_{k=1}^{n} c_{ik}d_{kj} = (CD)_{ij},
\]
and
\begin{align*}
(A + A^2)_{ij}
&= A_{ij} + (A^2)_{ij}=a_{ij} + \left(\sum_{k=1}^n a_{ik}a_{kj}\right)\\
&\stackrel{\eqrefbasis{eq:N21}}= a_{ij} + a_{ij}a_{ij}
\stackrel{\eqrefbasis{eq:N22}}= a_{ij}= A_{ij}.
\end{align*}
This shows that $AB = CD$ and $A=A+A^2$.
Hence $\mathbf{M}_n(N_2)$ satisfies both identities \eqref{eq:N21} and \eqref{eq:N22}.

According to Table~\ref{tab:2-element-ai-semirings}, the ai-semiring variety $\mathsf{V}(N_2)$ is
defined by the identities \eqref{eq:N21} and \eqref{eq:N22}.
Since $N_2$ can be embedded into $\mathbf{M}_n(N_2)$,
we conclude that $\mathsf{V}(\mathbf{M}_n(N_2))$ is the same variety.
\end{proof}

\begin{pro}\label{pro23}
Let $n\geq 1$ be an integer. Then
$\mathsf{V}(\mathbf{M}_n(T_2))$ is the ai-semiring variety determined by the identities \eqref{eq:N21} and \eqref{eq:T22}.
\end{pro}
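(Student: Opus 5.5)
The plan mirrors the proof of Proposition~\ref{pro233}. First I would verify that $\mathbf{M}_n(T_2)$ satisfies \eqref{eq:N21} and \eqref{eq:T22}, and then use the embedding of $T_2$ into $\mathbf{M}_n(T_2)$ together with the equational basis of $\mathsf{V}(T_2)$ from Table~\ref{tab:2-element-ai-semirings}.

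For the first step, take arbitrary $A=[a_{ij}]$, $B=[b_{ij}]$, $C=[c_{ij}]$, $D=[d_{ij}]$ in $\mathbf{M}_n(T_2)$. In $T_2$ every product equals $1$, so
\[
(AB)_{ij}=\sum_{k=1}^n a_{ik}b_{kj}=1=\sum_{k=1}^n c_{ik}d_{kj}=(CD)_{ij}.
\]
Hence $AB=CD$; indeed every product of two matrices is the all-ones matrix $J_n$. Identity \eqref{eq:N21} therefore holds. For \eqref{eq:T22}, note that $A^2=J_n$ and $1$ is the top element of $(T_2,\leq)$. Thus $(A+A^2)_{ij}=a_{ij}+1=1=(A^2)_{ij}$, so $A+A^2=A^2$. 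This holds for every $n\geq 1$, and for $n=1$ it simply says that $T_2$ satisfies its own basis.

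For the second step, the constant-matrix map $a\mapsto[a]_{n\times n}$ embeds $T_2$ into $\mathbf{M}_n(T_2)$; for $n=1$ it is the isomorphism. Hence $\mathsf{V}(T_2)\subseteq\mathsf{V}(\mathbf{M}_n(T_2))$. By the first step, $\mathsf{V}(\mathbf{M}_n(T_2))$ is contained in the variety defined by \eqref{eq:N21} and \eqref{eq:T22}, and by Table~\ref{tab:2-element-ai-semirings} that variety is exactly $\mathsf{V}(T_2)$. The two inclusions give equality.

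There is no real obstacle here. The only point needing care is that the embedding really is a semiring homomorphism into $\mathbf{M}_n(T_2)$. This holds because the constant matrix $[a][b]$ has entries $\sum_k ab=ab$, by idempotency of $+$.
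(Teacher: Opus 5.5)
Your proposal is correct and follows essentially the same route as the paper. You verify \eqref{eq:N21} and \eqref{eq:T22} in $\mathbf{M}_n(T_2)$, then sandwich $\mathsf{V}(\mathbf{M}_n(T_2))$ between $\mathsf{V}(T_2)$ (via the constant-matrix embedding) and the variety defined by the basis of $T_2$. Your observation that every product of matrices equals the top element $J_n$ is just a more direct version of the paper's entrywise chain of equalities showing $A^2=A^2+A$.
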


\begin{proof}
First, $\mathbf{M}_n(T_2)$ satisfies identity~\eqref{eq:N21}; the argument is similar to that of Proposition~\ref{pro233}.
Now let $A = [a_{ij}]$ be an arbitrary matrix in $\mathbf{M}_n(T_2)$.
For all indices $1 \leq i, j \leq n$, we have
\begin{align*}
(A^2)_{ij} = \sum_{k=1}^n a_{ik}a_{kj}
&= \left(\sum_{k=1}^n a_{ik}a_{kj}\right) + a_{ij}a_{jj} \\
&\stackrel{\eqrefbasis{eq:N21}}= \left(\sum_{k=1}^n a_{ik}a_{kj}\right) + a_{ij}^2 \\
&\stackrel{\eqrefbasis{eq:T22}}= \left(\sum_{k=1}^n a_{ik}a_{kj}\right) + a_{ij}^2 + a_{ij} \\
&\stackrel{\eqrefbasis{eq:N21}}= (A^2)_{ij} + A_{ij} = (A^2 + A)_{ij}.
\end{align*}
Thus $A^2= A^2+A$, and so $\mathbf{M}_n(T_2)$ also satisfies the identity~\eqref{eq:T22}.

From Table~\ref{tab:2-element-ai-semirings}, the ai-semiring variety $\mathsf{V}(T_2)$ is
defined by the identities \eqref{eq:N21} and \eqref{eq:T22}.
Since $T_2$ can be embedded into $\mathbf{M}_n(T_2)$,
we therefore obtain that $\mathsf{V}(\mathbf{M}_n(T_2))$ is the same variety.
\end{proof}

\section{Equational basis of the matrix semiring $\mathbf{M}_n(M_2)$}
In this section we study the finite basis problem for the matrix semiring $\mathbf{M}_n(M_2)$ with $n \geq 2$.
We show that all these semirings generate the same finitely based variety,
which is in fact generated by a single six-element ai-semiring.

We begin with the case $\mathbf{M}_2(M_2)$. Its $16$ elements can be classified as follows:
\begin{itemize}
\item $O$: the constant matrix with all entries equal to $0$;

\item $A, B, C, D$: the matrices with exactly one entry equal to $1$,
namely
\[
A=
\begin{bmatrix}
0&0\\
1&0
\end{bmatrix},\
B=
\begin{bmatrix}
0&1\\
0&0
\end{bmatrix},\
C=
\begin{bmatrix}
1&0\\
0&0
\end{bmatrix},\
D=
\begin{bmatrix}
0&0\\
0&1
\end{bmatrix};
\]

\item $E, G, P, Q, R, S$: the matrices with exactly two entries equal to $1$;

\item $W, X, Y, Z$: the matrices with exactly three entries equal to $1$;

\item $F$: the constant matrix with all entries equal to $1$.
\end{itemize}
The additive order of $\mathbf{M}_2(M_2)$ is shown in Figure~\ref{fig:mn_m2_semilattice}, and its multiplication is given by Table~\ref{tab:multR2}.
A direct verification shows that the set $\{O, A, P, R, Z, F\}$ forms a subsemiring of $\mathbf{M}_2(M_2)$,
which is denoted by $\mathbf{SR}_6$.
For the reader's convenience,
the additive order and multiplicative table of $\mathbf{SR}_6$
are provided in Figure~\ref{fig:additive_order 6} and Table~\ref{tab:cayley_table 6}, respectively.

The following result shows that $\mathbf{SR}_6$ is finitely based.
\begin{figure}[htbp]
    \centering
\begin{tikzpicture}[scale=1.2]
    \tikzset{smallnode/.style={font=\tiny}} 
    \node at (2.4,4) [circle,fill=red,draw=red, inner sep=1.5pt,label={[smallnode]above:$F$}] (F) {};
    \node at (0,3) [circle,fill=red,draw=red, inner sep=1.5pt,label={[smallnode]above:$Z$}] (Z) {};
    \node at (1.6,3) [circle,fill,inner sep=1.5pt,label={[smallnode]above:$Y$}] (Y) {};
    \node at (3.2,3) [circle,fill,inner sep=1.5pt,label={[smallnode]above:$X$}] (X) {};
    \node at (4.8,3) [circle,fill,inner sep=1.5pt,label={[smallnode]above:$W$}] (W) {};
    \node at (-1.6,2) [circle,fill=red,draw=red, inner sep=1.5pt,label={[smallnode]left:$P$}] (P) {};
    \node at (0,2) [circle,fill=red,draw=red, inner sep=1.5pt,label={[smallnode]below:$R$}] (R) {};
    \node at (1.6,2) [circle,fill,inner sep=1.5pt,label={[smallnode]below:$G$}] (G) {};
    \node at (3.2,2) [circle,fill,inner sep=1.5pt,label={[smallnode]below:$E$}] (E) {};
    \node at (4.8,2) [circle,fill,inner sep=1.5pt,label={[smallnode]below:$S$}] (S) {};
    \node at (6.4,2) [circle,fill,inner sep=1.5pt,label={[smallnode]right:$Q$}] (Q) {};
    \node at (0,1) [circle,fill=red,draw=red, inner sep=1.5pt,label={[smallnode]below:$A$}] (A) {};
    \node at (1.6,1) [circle,fill,inner sep=1.5pt,label={[smallnode]below:$B$}] (B) {};
    \node at (3.2,1) [circle,fill,inner sep=1.5pt,label={[smallnode]below:$C$}] (C) {};
    \node at (4.8,1) [circle,fill,inner sep=1.5pt,label={[smallnode]below:$D$}] (D) {};
    \node at (2.4,0) [circle,fill=red,draw=red, inner sep=1.5pt,label={[smallnode]below:$O$}] (0node) {};

    \draw (F) -- (Z) (F) -- (Y) (F) -- (X) (F) -- (W);
    \draw (Z) -- (P) (Z) -- (R) (Z) -- (E);
    \draw (Y) -- (E) (Y) -- (Q) (Y) -- (S);
    \draw (X) -- (P) (X) -- (G) (X) -- (S);
    \draw (W) -- (R) (W) -- (G) (W) -- (Q);
    \draw (A) -- (P) (A) -- (R) (A) -- (G) (A) -- (0node);
    \draw (B) -- (S) (B) -- (Q) (B) -- (G) (B) -- (0node);
    \draw (C) -- (P) (C) -- (S) (C) -- (E) (C) -- (0node);
    \draw (D) -- (Q) (D) -- (R) (D) -- (E) (D) -- (0node);
\end{tikzpicture}
\caption{The additive order of $\mathbf{M}_2(M_2)$}
\label{fig:mn_m2_semilattice}
\end{figure}
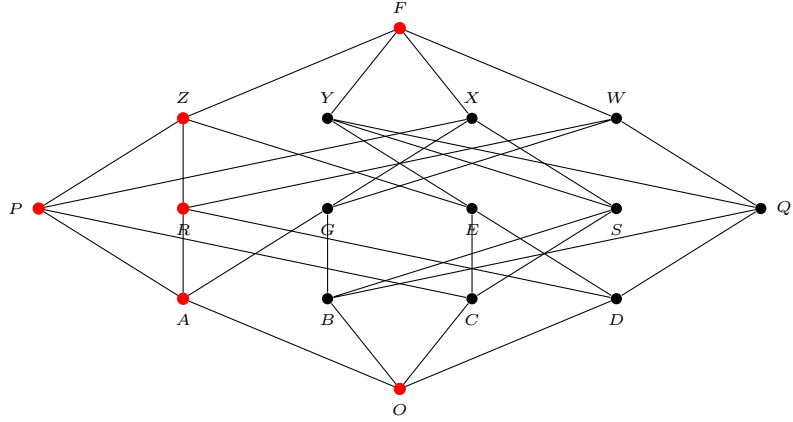

\begin{table}[ht]
\centering
\scriptsize 
\setlength{\tabcolsep}{2.8pt} 
\renewcommand{\arraystretch}{1.337} 
\caption{The multiplicative table of $\mathbf{M}_2(M_2)$}
\label{tab:multR2}
\begin{tabular}{c|*{16}{c}}
$\cdot$ & $\textcolor{red}{O}$ & $\textcolor{red}{A}$ & $B$ & $C$ & $D$ & $E$ & $G$ & $\textcolor{red}{P}$ & $Q$ & $\textcolor{red}{R}$ & $S$ & $\textcolor{red}{Z}$ & $Y$ & $X$ & $W$ & $\textcolor{red}{F}$ \\
\hline
$\textcolor{red}{O}$     & $\textcolor{red}{O}$ & $\textcolor{red}{P}$ & $Q$  & $P$ & $Q$ & $F$  & $F$ & $\textcolor{red}{P}$ & $Q$  & $\textcolor{red}{F}$ & $F$ & $\textcolor{red}{F}$ & $F$ & $F$ & $F$ & $\textcolor{red}{F}$  \\
$\textcolor{red}{A}$    & $\textcolor{red}{R}$ & $\textcolor{red}{Z}$   & $W$ & $Z$ & $W$ & $F$ & $F$ & $\textcolor{red}{Z}$ & $W$ & $\textcolor{red}{F}$ & $F$ & $\textcolor{red}{F}$ & $F$ & $F$ & $F$ & $\textcolor{red}{F}$ \\
$B$                     & $S$ & $X$ & $Y$ & $X$ & $Y$ & $F$ & $F$ & $X$ & $Y$ & $F$ & $F$ & $F$ & $F$ & $F$ & $F$ & $F$ \\
$C$                     & $S$ & $X$ & $Y$ & $X$ & $Y$ & $F$ & $F$ & $X$ & $Y$ & $F$ & $F$ & $F$ & $F$ & $F$ & $F$ & $F$ \\
$D$                     & $R$ & $Z$   & $W$ & $Z$ & $W$ & $F$ & $F$ & $Z$ & $W$ & $F$ & $F$ & $F$ & $F$ & $F$ & $F$ & $F$ \\
$E$                     & $F$ & $F$ & $F$ & $F$ & $F$ & $F$ & $F$ & $F$ & $F$ & $F$ & $F$ & $F$ & $F$ & $F$ & $F$ & $F$\\
$G$                     & $F$ & $F$ & $F$ & $F$ & $F$ & $F$ & $F$ & $F$ & $F$ & $F$ & $F$ & $F$ & $F$ & $F$ & $F$ & $F$ \\
$\textcolor{red}{P}$    & $\textcolor{red}{F}$ & $\textcolor{red}{F}$ & $F$ & $F$ & $F$ & $F$ & $F$ & $\textcolor{red}{F}$ & $F$ & $\textcolor{red}{F}$ & $F$ & $\textcolor{red}{F}$ & $F$ & $F$ & $F$ & $\textcolor{red}{F}$ \\
$Q$                     & $F$ & $F$ & $F$ & $F$ & $F$ & $F$ & $F$ & $F$ & $F$ & $F$ & $F$ & $F$ & $F$ & $F$ & $F$ & $F$ \\
$\textcolor{red}{R}$    & $\textcolor{red}{R}$ & $\textcolor{red}{Z}$   & $W$ & $Z$ & $W$ & $F$ & $F$ & $\textcolor{red}{Z}$ & $W$ & $\textcolor{red}{F}$ & $F$ & $\textcolor{red}{F}$ & $F$ & $F$ & $F$ & $\textcolor{red}{F}$ \\
$S$                    & $S$ & $X$ & $Y$ & $X$ & $Y$ & $F$ & $F$ & $X$ & $Y$ & $F$ & $F$ & $F$ & $F$ & $F$ & $F$ & $F$ \\
$\textcolor{red}{Z}$    & $\textcolor{red}{F}$ & $\textcolor{red}{F}$ & $F$ & $F$ & $F$ & $F$ & $F$ & $\textcolor{red}{F}$ & $F$ & $\textcolor{red}{F}$ & $F$ & $\textcolor{red}{F}$ & $F$ & $F$ & $F$ & $\textcolor{red}{F}$  \\
$Y$                    & $F$ & $F$ & $F$ & $F$ & $F$ & $F$ & $F$ & $F$ & $F$ & $F$ & $F$ & $F$ & $F$ & $F$ & $F$ & $F$ \\
$X$                    & $F$ & $F$ & $F$ & $F$ & $F$ & $F$ & $F$ & $F$ & $F$ & $F$ & $F$ & $F$ & $F$ & $F$ & $F$ & $F$ \\
$W$                    & $F$ & $F$ & $F$ & $F$ & $F$ & $F$ & $F$ & $F$ & $F$ & $F$ & $F$ & $F$ & $F$ & $F$ & $F$ & $F$ \\
$\textcolor{red}{F}$   & $\textcolor{red}{F}$ & $\textcolor{red}{F}$ & $F$ & $F$ & $F$ & $F$ & $F$ & $\textcolor{red}{F}$ & $F$ & $\textcolor{red}{F}$ & $F$ & $\textcolor{red}{F}$ & $F$ & $F$ & $F$ & $\textcolor{red}{F}$ \\
\end{tabular}
\end{table}

\begin{figure}
\centering
\begin{minipage}{0.5\textwidth}
\centering
\setlength{\unitlength}{0.7cm}
\begin{picture}(20, 4)
\put(4.5,2.2){\line(1,-1){1}}
\put(4.5,3.2){\line(0,-1){1}}
\put(4.5,2.2){\line(-1,-1){1}}
\put(4.5,0.2){\line(1,1){1}}
\put(4.5,0.2){\line(-1,1){1}}
\put(4.5,0.2){\line(0,-2){1}}

\multiput(4.5,2.2)(1,-1){2}{\circle*{0.1}}
\multiput(4.5,3.2)(1,-1){1}{\circle*{0.1}}
\multiput(4.5,2.2)(-1,-1){2}{\circle*{0.1}}
\multiput(4.5,0.2)(1,1){2}{\circle*{0.1}}
\put(4.5,-0.8){\circle*{0.1}}

\put(4.7,2.4){\makebox(0.2,0){$Z$}}
\put(4.7,3.2){\makebox(0.2,0){$F$}}
\put(4.7,0){\makebox(0.1,0.1){$A$}}
\put(3.3,1.2){\makebox(-0.2,0){$P$}}
\put(5.7,1.2){\makebox(0.2,0){$R$}}
\put(4.7,-1){\makebox(0.2,0){$O$}}
\end{picture}
\vspace{0.6em}
\caption{The additive order of $\mathbf{SR}_6$}
\label{fig:additive_order 6}
\end{minipage}
\hfill
\begin{minipage}{0.49\textwidth}
\centering
\captionof{table}{The multiplicative table of $\mathbf{SR}_6$}
\label{tab:cayley_table 6}
\vspace{0.5em}
\begin{tabular}{c|cccccc}
$\cdot$ & $O$ & $A$ & $P$ & $R$ & $Z$ & $F$\\
\hline
$O$ & $O$ & $P$ & $P$ & $F$ & $F$ & $F$\\
$A$ & $R$ & $Z$ & $Z$ & $F$ & $F$ & $F$\\
$P$ & $F$ & $F$ & $F$ & $F$ & $F$ & $F$\\
$R$ & $R$ & $Z$ & $Z$ & $F$ & $F$ & $F$\\
$Z$ & $F$ & $F$ & $F$ & $F$ & $F$ & $F$\\
$F$ & $F$ & $F$ & $F$ & $F$ & $F$ & $F$\\
\end{tabular}
\end{minipage}
\end{figure}

\begin{pro}\label{lemm24}
$\mathsf{V}(\mathbf{SR}_6)$ is the ai-semiring variety defined by the identities
\begin{align}
&xy\approx xy+x;    \label{f9}\\
&xy \approx xy+y;                 \label{f10}\\
&x_1x_2+x_3x_4 \approx x_1x_2+x_3x_4+x_1x_4;                  \label{f11}\\
&x_1x_2x_3x_4 \approx x_1x_2x_3+x_1x_2x_4+x_1x_3x_4+x_2x_3x_4.    \label{f12}
\end{align}
\end{pro}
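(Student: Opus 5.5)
The plan is to verify soundness directly and then prove completeness by reducing both sides to a normal form and separating non-derivable words with explicit evaluations in $\mathbf{SR}_6$. For soundness, I check \eqref{f9}--\eqref{f12} in $\mathbf{SR}_6$ from Figure~\ref{fig:additive_order 6} and Table~\ref{tab:cayley_table 6}. Two features of the table make this short. Every product $ab$ lies above both $a$ and $b$ ($O^2=O$, and otherwise the product is one of $P,R,Z,F$, which dominate the factors), which gives \eqref{f9} and \eqref{f10}. Every product of length $\ge 3$ in which some factor is not $O$ equals $F$ or a value determined by the first and last factors. I expect \eqref{f11} to follow because $ab$ depends, up to the order, only on whether $a\in\{O\}$, $a\in\{A,R\}$ or $a\in\{P,Z,F\}$, and similarly for $b$. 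For \eqref{f12} I would show that $x_1x_2x_3x_4=F$ unless all $x_i=O$, and that in that case both sides equal $O$; in all other cases some $3$-letter subword already evaluates to $F$.

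For completeness, I first record consequences of the basis $\Sigma=\{\eqref{f9},\eqref{f10},\eqref{f11},\eqref{f12}\}$. Iterating \eqref{f12}, together with \eqref{f9} and \eqref{f10}, turns every word $\bw$ with $\ell(\bw)\ge 4$ into the sum of all its length-$3$ scattered subwords. Consequently every term is $\Sigma$-equivalent to a sum of words of length at most $3$, and the sum may be taken closed under passing to scattered subwords (for words of length $\ge 2$, by \eqref{f9} and \eqref{f10}). From \eqref{f11}, whenever $\bu$ contains two words of length $\ge 2$, one can add any word $h(\bu_i)t(\bu_j)$. I would also derive the analogous length-$3$ statements, for example $xyz + x'y'z' \approx xyz+x'y'z'+xyz'$, by multiplying \eqref{f11} and using \eqref{f12}.

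Next come the necessary conditions. Suppose $\bu\approx\bu+\bq$ holds in $\mathbf{SR}_6$. The subsemiring $\{O,F\}$ is isomorphic to $M_2$, so $c(\bq)\subseteq c(\bu)$. I would then exhibit $S_{54}$, $S_{57}$ and $S_{60}$ as subsemirings or homomorphic images of subsemirings of $\mathbf{SR}_6$ (or of a small power of it); natural candidates are quotients collapsing $\{Z,F\}$ or $\{P,Z,F\}$. Lemmas~\ref{lemm54}, \ref{lemm57} and \ref{lemm60} then give several conditions:
\begin{itemize}
\item some $\bu_s$ has $\ell(\bu_s)\ge 2$;
\item if $\ell(\bq)\ge 2$, then $c(\bq)\subseteq c(L_{\ge 2}(\bu))$;
\item $c(s(\bq))\subseteq c(s(\bu))$ and $c(p(\bq))\subseteq c(p(\bu))$.
\end{itemize}
If these three semirings do not capture everything, I would add ad hoc substitutions into $\mathbf{SR}_6$. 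Examples are sending one variable to $A$ or $R$ and the rest to $O$, which detects letters in head or tail position, since $O\cdot A=P$ but $A\cdot O=R$.

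Finally, I would show that these conditions let us build $\bq$ from $\bu$. I split by $\ell(\bq)\in\{1,2,3\}$, which suffices by the normal form, since $\bq$ of length $\ge 4$ is a sum of its $3$-letter subwords. The case $\ell(\bq)=1$ is handled by \eqref{f9} and \eqref{f10} applied to a word of $\bu$ containing that letter. For $\ell(\bq)=2$ and $3$, the letters of $\bq$ occur at suitable positions of long words of $\bu$, and I glue the corresponding subwords using \eqref{f11} and its length-$3$ analogue.

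I expect the main obstacle to be this last matching step for $\ell(\bq)=3$, in particular for a middle letter, which appears only in some word's interior. I am not sure the listed necessary conditions are exactly sufficient. My first move would be to compute precisely which length-$3$ words evaluate to $O$ (only the all-$O$ assignment) versus $P,R,Z$. From this I would extract an exact criterion of the form ``$h(\bq)\in h(L_{\ge2}(\bu))$ or $h(\bq)$ occurs in a non-final position of some word of $\bu$, and dually for $t(\bq)$, and the middle letters occur somewhere.'' I would then check that this criterion is both forced by separating substitutions and realized by $\Sigma$.
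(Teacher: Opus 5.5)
\textbf{The gap is in the case $\ell(\bq)=3$.} Your framework is the paper's: reduce to $\ell(\bq)\le 3$ via \eqref{f12}, take necessary conditions from $S_{54}$, $S_{57}$, $S_{60}$, and glue with \eqref{f11}. These three semirings are simply the subsemirings $\{O,R,F\}$, $\{O,P,F\}$, $\{O,Z,F\}$ of $\mathbf{SR}_6$, so no quotients or powers are needed. But the decisive case is left open, and the criterion you propose for it is wrong.

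Neither the three lemma conditions nor your requirement that the middle letter merely ``occur somewhere'' suffice. Take $\bu=xy+yz$ and $\bq=xyz$. All of those conditions hold, and so does your head/tail criterion. Yet the substitution $y\mapsto A$, $x,z\mapsto O$ gives $\varphi(\bu)=OA+AO=P+R=Z$, while $\varphi(\bq)=OAO=F$. So your planned check that the criterion is ``realized by $\Sigma$'' cannot succeed. Indeed you list this very substitution among your ad hoc tests, but you use it only to detect head and tail positions.

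\textbf{The missing idea.} Use that substitution as the separating test for interior occurrences. Put $\varphi(x_2)=A$ and send every other variable to $O$.
\begin{itemize}
\item Any word in which $x_2$ occurs at most in the first and last positions evaluates into $\{O,A,P,R,Z\}$, so $\varphi(\bu)\le Z$.
\item On the other hand $\varphi(x_1x_2x_3)=F$, since $\varphi(x_1)\varphi(x_2)\in\{P,Z\}$ and $Pa=Za=F$ for every $a$.
\end{itemize}
Hence $\bu$ must contain a word $\bp_1x_2\bp_2$ with both $\bp_1$ and $\bp_2$ nonempty. The gluing then goes as follows.
\begin{itemize}
\item Apply \eqref{f11}, with general split points rather than only $h(\bu_i)t(\bu_j)$, to $\bp_3x_1\bp_3'$ (with $\bp_3'$ nonempty) and $\bp_1x_2\bp_2$. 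This gives $\bp_3x_1x_2\bp_2$.
\item A second application with $\bp_4x_3\bp_4'$ (with $\bp_4$ nonempty) gives $\bp_3x_1x_2x_3\bp_4'$.
\item Then \eqref{f9} and \eqref{f10} trim this to $\bq$.
\end{itemize}

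\textbf{Smaller slips.}
\begin{itemize}
\item Your soundness argument for \eqref{f12} is false as stated: $OOOA=P$ and $AOOA=Z$. The correct fact is that a product of at least three factors equals $F$ once some interior factor differs from $O$, and otherwise depends only on the first and last factors. That makes \eqref{f12} immediate.
\item Adding the scattered subword $x_1x_3$ of $x_1x_2x_3$ needs \eqref{f11}, not only \eqref{f9} and \eqref{f10}.
\item In the check of \eqref{f11}, the relevant classes for the right factor are $\{O\},\{A,P\},\{R,Z,F\}$. These are not the classes $\{O\},\{A,R\},\{P,Z,F\}$ used for the left factor.
\end{itemize}
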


\begin{proof}
It is straightforward to verify that $\mathbf{SR}_6$ satisfies the identities \eqref{f9}--\eqref{f12}.
In the remainder it suffices to show that every nontrivial ai-semiring identity satisfied by $\mathbf{SR}_6$ is derivable from \eqref{f9}--\eqref{f12} and the identities defining $\mathbf{AI}$.
Let $\bu \approx \bu+\bq$ be such an identity, where $\bu=\bu_1+\bu_2+\cdots+\bu_m$ and $\bu_i, \bq\in X^+$, $1 \leq i \leq m$.
Since $S_{54} \cong \{R, O, F\}$ and $S_{57} \cong \{P, O, F\}$,
Lemmas~\ref{lemm54} and~\ref{lemm57} imply
\[
c(p(\mathbf{q})) \subseteq c(p(\mathbf{u})) \quad \text{and} \quad c(s(\mathbf{q})) \subseteq c(s(\mathbf{u})).
\]
Moreover, because $S_{60} \cong \{O, Z, F\}$, Lemma~\ref{lemm60} gives
\[
c(\mathbf{q}) \subseteq
\begin{cases}
c(\mathbf{u}), & \text{if } \ell(\mathbf{q}) = 1,\\[4pt]
c(L_{\geq 2}(\mathbf{u})), & \text{if } \ell(\mathbf{q}) \geq 2.
\end{cases}
\]
If $\bq=x_1x_2\cdots x_k$ with $k\geq 4$, then by the identity \eqref{f12},
one can derive
\[\bq\approx \sum_{1\leq {i_1}<{i_2}<{i_3}\leq k}x_{i_1}x_{i_2}x_{i_3}.
\]
Hence, we only need to consider the case that $\ell(\bq) \leq 3$.

\noindent \textbf{Case 1.} $\ell(\bq)=1$. Then there exists $\bu_r\in \bu$ with $\ell(\bu_r)\geq 2$ such that
$c(\bq)\subseteq c(\bu_r)$,
and so $\bu_r=\bp_1\bq\bp_2$ for some $\bp_1, \bp_2\in X^*$.
We therefore have
\[
\bu \approx \bu+\bu_r \approx \bu+\bp_1\bq\bp_2\stackrel{\eqref{f9}\eqref{f10}}\approx \bu+\bq.
\]
This derives the identity $\bu\approx \bu+\bq$.

\noindent \textbf{Case 2.} $\ell(\bq)=2$. Then $c(\bq)\subseteq c(L_{\geq 2}(\bu))$.
Let us write $\bq=xy$ with $x, y\in X$. Then $x\in c(p(\bu))$ and $y\in c(s(\bu))$.
Hence,
there exist $\bu_i, \bu_j\in L_{\geq 2}(\bu)$ such that $\bu_i=\bp_1x\bp_1'$ and $\bu_j=\bp_2y\bp_2'$
for some $\bp_1', \bp_2\in X^+$ and some $\bp_1, \bp_2' \in X^*$.
We can deduce that
\begin{align*}
\bu
&\approx \bu+\bu_i+\bu_j\\
&\approx \bu+\bp_1x\bp_1'+\bp_2y\bp_2'\\
&\approx \bu+\bp_1x\bp_1'+\bp_2y\bp_2'+\bp_1xy\bp_2' &&(\text{by}~\eqref{f11})\\
&\approx \bu+\bp_1x\bp_1'+\bp_2y\bp_2'+\bp_1xy\bp_2'+xy &&(\text{by}~\eqref{f9}, \eqref{f10})\\
&\approx \bu+\bp_1x\bp_1'+\bp_2y\bp_2'+\bp_1xy\bp_2'+\bq.
\end{align*}
This implies the identity $\bu\approx \bu+\bq$.

\noindent \textbf{Case 3.} $\ell(\bq)=3$.
We may write $\bq=x_1x_2x_3$ for $x_1, x_2, x_3\in X$.
Then $x_1\in c(p(\bu))$, $x_3\in c(s(\bu))$ and $x_2\in c(p(\bu))\cap c(s(\bu))$.
We claim that there exists $\bu_i\in \bu$ such that $\bu_i=\bp_1x_2\bp_2$,
where both words $\bp_1$ and $\bp_2$ are nonempty.

Suppose, to the contrary, that for every $\mathbf{u}_i \in \mathbf{u}$ of
the form $\mathbf{u}_i = \mathbf{p}_1 x_2 \mathbf{p}_2$, at least one of $\mathbf{p}_1$ or $\mathbf{p}_2$ is empty.
Consider the substitution $\varphi\colon X \to \mathbf{SR}_6$ defined by
$\varphi(x)=A$ if $x=x_2$, and $\varphi(x)=O$ otherwise.
Then $\varphi(\bu)\in\{P, R, Z\}$ while $\varphi(\bq)=F$.
This implies that $\varphi(\bu) \neq \varphi(\bu+\bq)$, a contradiction.
Since $x_1\in c(p(\bu))$ and $x_3\in c(s(\bu))$, there are $\bu_s, \bu_t\in L_{\geq 2}(\bu)$ such that
$\bu_s=\bp_3x_1\bp_3'$ and $\bu_t=\bp_4x_3\bp_4'$ for some $\bp_3', \bp_4\in X^+$ and some $\bp_3, \bp_4' \in X^*$.
We deduce that
\begin{align*}
\bu
&\approx \bu+\bu_s+\bu_i+\bu_t\\
&\approx \bu+\bp_3x_1\bp_3'+\bp_1x_2\bp_2+\bp_4x_3\bp_4'\\
&\approx \bu+\bp_3x_1\bp_3'+\bp_1x_2\bp_2+\bp_3x_1x_2\bp_2+\bp_4x_3\bp_4' &&(\text{by}~\eqref{f11})\\
&\approx \bu+\bp_3x_1\bp_3'+\bp_1x_2\bp_2+\bp_3x_1x_2\bp_2+\bp_4x_3\bp_4'+\bp_3x_1x_2x_3\bp_4' &&(\text{by}~\eqref{f11})\\
&\approx \bu+\bp_3x_1x_2x_3\bp_4'+x_1x_2x_3&&(\text{by}~\eqref{f9}, \eqref{f10})\\
&\approx \bu+\bp_3x_1x_2x_3\bp_4'+\bq.
\end{align*}
This completes the proof.
\end{proof}

\begin{cor}\label{coro25122001}
$\mathsf{V}(\mathbf{M}_2(M_2))=\mathsf{V}(\mathbf{SR}_6)$.
\end{cor}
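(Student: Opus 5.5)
Since $\mathbf{SR}_6$ is a subsemiring of $\mathbf{M}_2(M_2)$, we have $\mathsf{V}(\mathbf{SR}_6)\subseteq \mathsf{V}(\mathbf{M}_2(M_2))$. For the reverse inclusion, Proposition~\ref{lemm24} gives an equational basis of $\mathsf{V}(\mathbf{SR}_6)$, namely \eqref{f9}--\eqref{f12}. So it suffices to verify that $\mathbf{M}_2(M_2)$ satisfies these four identities. We will do the check entrywise, in the same spirit as Propositions~\ref{pro21}--\ref{pro23}, but working in $\mathbf{M}_n(M_2)$ for general $n$, since this costs nothing extra and will be reused later.

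Recall that in $M_2$ we have $a+b=ab$, so the operations coincide and each is the join. Hence an entry of a product is
\[
(A_1A_2\cdots A_k)_{ij}=\sum_{i=r_0,r_1,\dots,r_k=j}(a_1)_{r_0r_1}+(a_2)_{r_1r_2}+\cdots+(a_k)_{r_{k-1}r_k},
\]
where the sum ranges over all index sequences. Since every index sequence is summed over, this entry equals $1$ exactly when one of the following holds: $A_1$ has a nonzero entry in row $i$; or $A_k$ has a nonzero entry in column $j$; or some middle factor $A_t$ with $1<t<k$ is nonzero.

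With this description, each identity is checked as follows.
\begin{itemize}
\item For \eqref{f9}, note that if $(A)_{ij}=1$ then row $i$ of $A$ is nonzero. Therefore $(AB)_{ij}=1$, so $AB\ge A$. The identity \eqref{f10} follows dually, using columns.
\item For \eqref{f11}, entry $(i,j)$ of $X_1X_4$ is $1$ iff row $i$ of $X_1$ or column $j$ of $X_4$ is nonzero. In the first case $(X_1X_2)_{ij}=1$, and in the second case $(X_3X_4)_{ij}=1$.
\item For \eqref{f12}, the entry $(x_1x_2x_3x_4)_{ij}$ is $1$ iff row $i$ of $x_1$ is nonzero, or column $j$ of $x_4$ is nonzero, or $x_2$ or $x_3$ is nonzero. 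If $x_2\neq 0$, then $x_1x_2x_4$ has entry $1$ there. If $x_3\neq0$, then $x_1x_3x_4$ does. The row and column conditions are witnessed by $x_1x_2x_3$ and $x_2x_3x_4$ respectively. Conversely, each term on the right has an entry $1$ only under one of these same conditions, because a middle factor of a three-letter product is among $x_2,x_3$. So the two sides agree.
\end{itemize}

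There is no serious obstacle here. The only care needed is to state the entry-description correctly and to handle $k=1$ (no middle factor) uniformly with the other cases. With the identities verified, $\mathsf{V}(\mathbf{M}_2(M_2))\subseteq\mathsf{V}(\mathbf{SR}_6)$, and equality follows.
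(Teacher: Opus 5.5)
Your proof is correct and is the paper's argument. The inclusion $\mathsf{V}(\mathbf{SR}_6)\subseteq\mathsf{V}(\mathbf{M}_2(M_2))$ holds because $\mathbf{SR}_6$ is a subsemiring, and the reverse inclusion follows from Proposition~\ref{lemm24} once $\mathbf{M}_2(M_2)$ satisfies \eqref{f9}--\eqref{f12}; the paper calls this check routine, and your entrywise description carries it out explicitly (the description is valid for $k\ge 2$, which is all you use, so drop the claim that $k=1$ fits uniformly). Your converse for \eqref{f12} also needs one more observation: when $x_2$ is the first factor or $x_3$ the last factor of a right-hand term, the row or column condition forces $X_2\neq 0$ or $X_3\neq 0$, so it is still covered by the left-hand conditions.
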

\begin{proof}
A routine verification shows that $\mathbf{M}_2(M_2)$ satisfies identities \eqref{f9}--\eqref{f12}.
By Proposition~\ref{lemm24},
$\mathsf{V}(\mathbf{M}_2(M_2)) \subseteq \mathsf{V}(\mathbf{SR}_6)$.
Since $\mathbf{SR}_6$ is a subsemiring of $\mathbf{M}_2(M_2)$, it follows that
$\mathsf{V}(\mathbf{SR}_6) \subseteq \mathsf{V}(\mathbf{M}_2(M_2))$.
Therefore, $\mathsf{V}(\mathbf{M}_2(M_2)) = \mathsf{V}(\mathbf{SR}_6)$.
\end{proof}
%
\begin{cor}\label{cor25}
$\mathsf{V}(\mathbf{M}_2(M_2))$ is the ai-semiring variety defined by the identities \eqref{f9}--\eqref{f11}.
\end{cor}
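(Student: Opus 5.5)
By Corollary~\ref{coro25122001} and Proposition~\ref{lemm24}, $\mathsf{V}(\mathbf{M}_2(M_2))$ is defined by \eqref{f9}--\eqref{f12}. It therefore suffices to show that \eqref{f12} is derivable from \eqref{f9}--\eqref{f11} together with the axioms of $\mathbf{AI}$. The plan is to derive separately the two inequalities
\[
x_1x_2x_3x_4 \approx x_1x_2x_3x_4 + x_1x_2x_3+x_1x_2x_4+x_1x_3x_4+x_2x_3x_4
\]
and
\[
x_1x_2x_3+x_1x_2x_4+x_1x_3x_4+x_2x_3x_4 \approx x_1x_2x_3+x_1x_2x_4+x_1x_3x_4+x_2x_3x_4+x_1x_2x_3x_4 .
\]
These are exactly the reductions $\bu\approx\bu+\bv_j$ and $\bv\approx\bv+\bu_i$ described in Section~2.

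For the first inequality, observe that \eqref{f9} and \eqref{f10} let us delete the first or the last letter of a word while keeping the original word. Applying \eqref{f9} with $x=x_1x_2x_3$, $y=x_4$ gives $x_1x_2x_3$. Applying \eqref{f10} with $x=x_1$, $y=x_2x_3x_4$ gives $x_2x_3x_4$. The interior deletions need slightly more care. First obtain $x_1x_2$ (by \eqref{f9} twice) and $x_3x_4$ (by \eqref{f10} twice) as summands. Then apply \eqref{f11} to $x_1x_2+x_3x_4$ with $x_1,x_2,x_3,x_4$ in the roles written, which yields the summand $x_1x_4$. This is length two, so it does not directly give what we want. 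Instead we substitute longer factors into \eqref{f11}: take $x_1\mapsto x_1$, $x_2\mapsto x_2x_3x_4$, $x_3\mapsto x_1x_2$, $x_4\mapsto x_4$. Here $x_1\cdot x_2x_3x_4$ is the original word and $x_1x_2\cdot x_4$ is... not yet available. So the better route is: from $x_1x_2x_3$ and $x_3x_4$ (both already derived), use \eqref{f11} with $x_1\mapsto x_1x_2,\ x_2\mapsto x_3,\ x_3\mapsto x_3,\ x_4\mapsto x_4$ to get $x_1x_2x_4$. Symmetrically, from $x_1x_2$ and $x_2x_3x_4$, use \eqref{f11} with $x_1\mapsto x_1,\ x_2\mapsto x_2,\ x_3\mapsto x_2,\ x_4\mapsto x_3x_4$ to get $x_1x_3x_4$. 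This settles the first inequality.

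The second inequality is where the work lies. Starting from the sum of the four length-three words, apply \eqref{f11} to $x_1x_2x_3 + x_2x_3x_4$, reading the first word as $(x_1x_2)\cdot x_3$ and the second as $x_2\cdot(x_3x_4)$. This produces $x_1x_2\cdot x_3x_4=x_1x_2x_3x_4$, as required. The only real obstacle is choosing the factorizations so that the ``outer'' pieces $x_1x_2$ and $x_3x_4$ glue to the original word; once that is done, the step is a single application of \eqref{f11}. If this succeeds, \eqref{f12} is a consequence of \eqref{f9}--\eqref{f11}, and the corollary follows.
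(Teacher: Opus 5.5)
Your proposal is correct and follows the paper's proof. You reduce to deriving \eqref{f12} from \eqref{f9}--\eqref{f11}, and for the nontrivial direction you apply \eqref{f11} to $x_1x_2x_3+x_2x_3x_4$ read as $(x_1x_2)x_3+x_2(x_3x_4)$, which is exactly the paper's final step. The remaining differences are cosmetic: you reach $x_1x_2x_4$ and $x_1x_3x_4$ through the auxiliary summands $x_1x_2$ and $x_3x_4$, which is harmless because summands lying below $x_1x_2x_3x_4$ may be discarded, while the paper applies \eqref{f11} directly with the four-letter word, e.g.\ to $x_1(x_2x_3x_4)+x_2(x_3x_4)$. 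You can also delete the exploratory dead end and the closing ``if this succeeds'', since the derivation does succeed.
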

\begin{proof}
By Proposition~\ref{lemm24} and Corollary~\ref{coro25122001},
it suffices to show that the identity \eqref{f12} follows from the identities \eqref{f9}--\eqref{f11}.
Indeed, we have
\begin{align*}
x_1x_2x_3x_4 &\approx x_1x_2x_3x_4+x_2x_3x_4&&(\text{by}~\eqref{f10})\\
             &\approx x_1x_2x_3x_4+x_1x_3x_4+x_2x_3x_4&&(\text{by}~\eqref{f11})\\
             &\approx x_1x_2x_3x_4+x_1x_3x_4+x_2x_3x_4+x_1x_2x_3&&(\text{by}~\eqref{f9}\\
             &\approx x_1x_2x_3x_4+x_1x_2x_3+x_2x_3x_4+x_1x_3x_4+x_1x_2x_4&&(\text{by}~\eqref{f11})\\
             &\approx x_1x_2x_3+x_2x_3x_4+x_1x_3x_4+x_1x_2x_4.&&(\text{by}~\eqref{f11})
\end{align*}
This derives the identity \eqref{f12}.
\end{proof}

\begin{pro}\label{pro25}
Let $n\geq 2$ be an integer. Then $\mathsf{V}(\mathbf{M}_n(M_2))=\mathsf{V}(\mathbf{M}_2(M_2))$.
\end{pro}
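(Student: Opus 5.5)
We prove the two inclusions separately.

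\textbf{The inclusion $\mathsf{V}(\mathbf{M}_2(M_2))\subseteq\mathsf{V}(\mathbf{M}_n(M_2))$.} For $n\geq 2$ we embed $\mathbf{M}_2(M_2)$ into $\mathbf{M}_n(M_2)$. In $M_2$ the element $0$ is the additive identity but not a multiplicative zero, since $0\cdot 0=0$ while $0\cdot 1=1$. So the naive block embedding (padding with zeros) does not work directly. Instead, it is simpler to embed $\mathbf{SR}_6$ into $\mathbf{M}_n(M_2)$ and then invoke Corollary~\ref{coro25122001}.

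Alternatively, one can use a ``blow-up'' map. Partition $\{1,\dots,n\}$ into two nonempty blocks $I_1,I_2$, and send a $2\times 2$ matrix $[a_{kl}]$ to the $n\times n$ matrix whose $(i,j)$ entry is $a_{kl}$ whenever $i\in I_k$ and $j\in I_l$. This map is clearly additive and injective. For multiplicativity, note that in $M_2$ we have $a+a=a$ and multiplication equals join. Hence
\[
\sum_{m} a_{k(i),k(m)}b_{k(m),l(j)}
\]
equals $\sum_{r=1}^2 a_{k r}b_{r l}$, because each block index $r$ occurs at least once (both blocks are nonempty) and repetitions are absorbed by idempotency. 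So the map is an embedding, and this direction needs nothing else.

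\textbf{The inclusion $\mathsf{V}(\mathbf{M}_n(M_2))\subseteq\mathsf{V}(\mathbf{M}_2(M_2))$.} By Corollary~\ref{cor25} it suffices to verify that $\mathbf{M}_n(M_2)$ satisfies \eqref{f9}--\eqref{f11}. We argue entrywise, using the fact that in $M_2$ the product is the join, so that $M_2$ is just the two-element semilattice with $ab=a+b$.
\begin{itemize}
\item For \eqref{f9}: $(XY)_{ij}=\sum_k (x_{ik}+y_{kj})$, which is at least $x_{ij}$ (take $k=j$). Hence $XY+X=XY$.
\item For \eqref{f10}: symmetrically, taking $k=i$ gives $(XY)_{ij}\geq y_{ij}$, so $XY+Y=XY$.
\item For \eqref{f11}: $(X_1X_4)_{ij}=\sum_k x^{(1)}_{ik}+\sum_k x^{(4)}_{kj}$. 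The first sum is at most $(X_1X_2)_{ij}$ and the second is at most $(X_3X_4)_{ij}$, since both of those products contain the full sums $\sum_k x^{(1)}_{ik}$ and $\sum_k x^{(4)}_{kj}$ respectively. Hence $X_1X_4\leq X_1X_2+X_3X_4$.
\end{itemize}

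\textbf{Expected difficulty.} None of these steps looks hard. The only point needing care is the embedding: the block blow-up relies on $M_2$ being multiplicatively idempotent and commutative, which avoids the lack of a multiplicative zero. The rest is routine entrywise verification.
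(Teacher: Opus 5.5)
Your proof is correct. It shares the paper's overall structure but uses a different embedding.

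\textbf{First inclusion.} You prove $\mathsf{V}(\mathbf{M}_n(M_2))\subseteq\mathsf{V}(\mathbf{M}_2(M_2))$ exactly as the paper does: you check \eqref{f9}--\eqref{f11} entrywise using $ab=a+b$ in $M_2$ and then invoke Corollary~\ref{cor25}.

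\textbf{The embedding.} The genuine difference is how you embed $\mathbf{M}_2(M_2)$ into $\mathbf{M}_n(M_2)$. The paper defines an ad hoc map on all sixteen elements. It pads the $2\times 2$ block with $H$, $K$, $J$ chosen element by element, and leaves the homomorphism check as ``straightforward.'' Your block blow-up, $\Phi(A)_{ij}=a_{\kappa(i)\kappa(j)}$ for a surjection $\kappa\colon\{1,\dots,n\}\to\{1,2\}$, is uniform. Its verification is the one-line computation you give.

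\textbf{Why the blow-up works.} Your closing remark misidentifies the reason. You group $\sum_m a_{\kappa(i)\kappa(m)}b_{\kappa(m)\kappa(j)}$ by $r=\kappa(m)$ and collapse the $|\kappa^{-1}(r)|\geq 1$ equal summands. That step uses only additive idempotence $a+a=a$, not multiplicative idempotence or commutativity of $M_2$; the phrase ``multiplication equals join'' in that step is superfluous.

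\textbf{What this gives.} Your argument therefore yields an embedding $\mathbf{M}_m(S)\hookrightarrow\mathbf{M}_n(S)$ for every ai-semiring $S$ and all $m\leq n$. The paper's $M_2$-specific map does not provide this. The constant-matrix embedding $S\hookrightarrow\mathbf{M}_n(S)$ from the introduction is the case $m=1$.

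\textbf{Minor point.} Your opening suggestion to embed $\mathbf{SR}_6$ instead is never carried out and can simply be dropped.
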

\begin{proof}
We first show that $\mathbf{M}_n(M_2)$ lies in the variety $\mathsf{V}(\mathbf{M}_2(M_2))$.
By Corollary~\ref{cor25} it is enough to verify that $\mathbf{M}_n(M_2)$ satisfies the identities \eqref{f9}--\eqref{f11}.
Let $A=[a_{ij}]$, $B=[a_{ij}]$, $C=[a_{ij}]$, and $D =[d_{ij}]$ be arbitrary matrices in $\mathbf{M}_n(M_2)$.
For any $1 \leq i, j \leq n$,
\begin{align*}
(AB+A)_{ij}
&=(AB)_{ij}+A_{ij}\\
&=\left(\sum_{k=1}^{n} a_{ik}b_{kj}\right)+a_{ij}
\stackrel{\eqrefbasis{eq:M2}}=\left(\sum_{k=1}^n a_{ik}\right) + \left(\sum_{k=1}^n b_{kj}\right) + a_{ij}\\
&=\left(\sum_{k=1}^n a_{ik}\right) + \left(\sum_{k=1}^n b_{kj}\right)\stackrel{\eqrefbasis{eq:M2}}=\sum_{k=1}^{n} a_{ik}b_{kj}=(AB)_{ij}.
\end{align*}
Thus $AB + A=AB$. A similar argument yields $AB + B=AB$.
So
$\mathbf{M}_n(M_2)$ satisfies the identities \eqref{f9} and \eqref{f10}.
Next,
\begin{align*}
& \quad(AB+CD)_{ij}\\
&=(AB)_{ij}+ (CD)_{ij}\\
&= \left(\sum_{k=1}^n a_{ik}b_{kj}\right) + \left(\sum_{k=1}^n c_{ik}d_{kj}\right) \\
&\stackrel{\eqrefbasis{eq:M2}}=\left(\sum_{k=1}^n a_{ik}\right) + \left(\sum_{k=1}^n b_{kj}\right) + \left(\sum_{k=1}^n c_{ik}\right) + \left(\sum_{k=1}^n d_{kj}\right) \\
&=\left(\sum_{k=1}^n a_{ik}\right) + \left(\sum_{k=1}^n b_{kj}\right) + \left(\sum_{k=1}^n c_{ik}\right) + \left(\sum_{k=1}^n d_{kj}\right) + \left(\sum_{k=1}^n a_{ik}\right) + \left(\sum_{k=1}^n d_{kj}\right) \\
&\stackrel{\eqrefbasis{eq:M2}}=\left(\sum_{k=1}^n a_{ik}b_{kj}\right) + \left(\sum_{k=1}^n c_{ik}d_{kj}\right) + \left(\sum_{k=1}^n a_{ik}d_{kj}\right) \\
&= (AB)_{ij}+ (CD)_{ij}+ (AD)_{ij}\\
&= (AB+CD+AD)_{ij}.
\end{align*}
Hence $AB + CD=AB + CD + AD$, and so $\mathbf{M}_n(M_2)$ satisfies the identity \eqref{f11}.

To end this proof, it remains to show that $\mathbf{M}_2(M_2)$ can be embedded into $\mathbf{M}_n(M_2)$.
We fix some notation: for any positive integers $k$ and $\ell$,
\begin{itemize}
\item $O_{k \times \ell}$ denotes the $k \times \ell$ constant matrix with all entries equal to $0$.

\item $J_{k \times \ell}$ denotes the $k \times \ell$ constant matrix with all entries equal to $1$.

\item $H_{2 \times (n-2)}$ denotes the $2 \times (n-2)$ constant matrix
$\begin{bmatrix}
0 & 0 & \cdots & 0 \\ 1 & 1 & \cdots & 1
\end{bmatrix}$.
\item $K_{2 \times (n-2)}$ denotes the $2 \times (n-2)$ constant matrix
$\begin{bmatrix}
1 & 1 & \cdots & 1 \\ 0 & 0 & \cdots & 0
\end{bmatrix}$.

\item $M_{k \times \ell}^T$ denotes the transpose of a matrix $M_{k \times \ell}$.
\end{itemize}
Consider the mapping $\varphi\colon \mathbf{M}_2(M_2) \to \mathbf{M}_n(M_2)$ defined by
\[
\begin{aligned}
\varphi(O) &= O_{n \times n}, \qquad \varphi(F)= J_{n \times n}, \\[1pt]
\varphi(X) &=
\begin{bmatrix}
X & J_{2 \times (n-2)} \\
J_{(n-2) \times 2} & J_{(n-2) \times (n-2)}
\end{bmatrix}\quad \text{for } X \in \{E, G, W, X, Y, Z\}, \\[8pt]
\varphi(A) &=\begin{bmatrix}
A & H_{2 \times (n-2)} \\
K_{2 \times (n-2)}^T & J_{(n-2) \times (n-2)}
\end{bmatrix},
\varphi(B) =\begin{bmatrix}
B &  K_{2 \times (n-2)}\\
H_{2 \times (n-2)}^T & J_{(n-2) \times (n-2)}
\end{bmatrix}, \\[8pt]
\varphi(C) &=\begin{bmatrix}
C &  K_{2 \times (n-2)}\\
K_{2 \times (n-2)}^T & J_{(n-2) \times (n-2)}
\end{bmatrix},
\varphi(D) =\begin{bmatrix}
D &  H_{2 \times (n-2)}\\
H_{2 \times (n-2)}^T & J_{(n-2) \times (n-2)}
\end{bmatrix}, \\[8pt]
\varphi(P) &=\begin{bmatrix}
P &  J_{2 \times (n-2)}\\
K_{2 \times (n-2)}^T & J_{(n-2) \times (n-2)}
\end{bmatrix},
\varphi(Q) =\begin{bmatrix}
Q &  J_{2 \times (n-2)}\\
H_{2 \times (n-2)}^T & J_{(n-2) \times (n-2)}
\end{bmatrix}, \\[8pt]
\varphi(R) &=\begin{bmatrix}
R &  H_{2 \times (n-2)}\\
J_{2 \times (n-2)}^T & J_{(n-2) \times (n-2)}
\end{bmatrix},
\varphi(S) =\begin{bmatrix}
S &  K_{2 \times (n-2)}\\
J_{2 \times (n-2)}^T & J_{(n-2) \times (n-2)}
\end{bmatrix}. \\[8pt]
\end{aligned}
\]
It is straightforward to verify that $\varphi$ is a monomorphism.
\end{proof}

\begin{cor}
Let $n\geq 2$ be an integer.
Then $\mathsf{V}(\mathbf{M}_n(M_2))$ is the ai-semiring variety defined by the identities \eqref{f9}--\eqref{f11}.
\end{cor}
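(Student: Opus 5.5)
This corollary follows by chaining the two preceding results, so no new derivations are needed. By Proposition~\ref{pro25}, for every integer $n\geq 2$ we have $\mathsf{V}(\mathbf{M}_n(M_2))=\mathsf{V}(\mathbf{M}_2(M_2))$. By Corollary~\ref{cor25}, the variety $\mathsf{V}(\mathbf{M}_2(M_2))$ is the ai-semiring variety defined by \eqref{f9}--\eqref{f11}. Combining these two equalities of varieties gives the claim.

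Concretely, I would write the argument in two steps. First, recall that Proposition~\ref{pro25} was proved by establishing both inclusions. One inclusion comes from checking directly that $\mathbf{M}_n(M_2)$ satisfies \eqref{f9}--\eqref{f11}; this gives $\mathsf{V}(\mathbf{M}_n(M_2))\subseteq\mathsf{V}(\mathbf{M}_2(M_2))$ via Corollary~\ref{cor25}. The other inclusion comes from the explicit monomorphism $\varphi\colon\mathbf{M}_2(M_2)\to\mathbf{M}_n(M_2)$. Second, substitute the finite basis from Corollary~\ref{cor25} to identify the common variety.

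I expect essentially no obstacle here, since all the substantive work is already done. The genuinely hard parts were the completeness argument in Proposition~\ref{lemm24}, which rests on Lemmas~\ref{lemm54}, \ref{lemm57} and~\ref{lemm60}, and the verification that $\varphi$ is a monomorphism. The only point worth making explicit is that $n\geq 2$ is needed so that Proposition~\ref{pro25} applies. As a consequence, $\mathbf{M}_n(M_2)$ is finitely based for all $n\geq 2$, which settles the $M_2$ case of Theorem~\ref{main}.
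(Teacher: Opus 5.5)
Your proposal is correct and is exactly the paper's argument. The paper proves the corollary in one line, by combining Proposition~\ref{pro25}, which gives $\mathsf{V}(\mathbf{M}_n(M_2))=\mathsf{V}(\mathbf{M}_2(M_2))$ for $n\geq 2$, with Corollary~\ref{cor25}, which shows that $\mathsf{V}(\mathbf{M}_2(M_2))$ is defined by \eqref{f9}--\eqref{f11}.
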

\begin{proof}
This follows directly from Corollary~\ref{cor25} and Proposition~\ref{pro25}.
\end{proof}

\section{Conclusion}
We have provided a complete classification of
matrix semirings $\mathbf{M}_n(S)$ over two-element ai-semirings $S$
with respect to the finite basis property.
From Remark~\ref{rem1} and \cite[Figure 1]{yr25},
the variety $\mathsf{V}(\mathbf{M}_n(L_2))$ has precisely $5$ subvarieties;
they form a distributive lattice and are all finitely based.
The same conclusion holds for the variety $\mathsf{V}(\mathbf{M}_n(R_2))$.
By \cite[Theorem 1]{polin} together with Propositions~\ref{pro233} and \ref{pro23},
the varieties $\mathsf{V}(\mathbf{M}_n(N_2))$ and $\mathsf{V}(\mathbf{M}_n(T_2))$
are both minimal nontrivial varieties.
In contrast, the subvariety lattice of the variety $\mathsf{V}(\mathbf{M}_n(M_2))$
remains to be explicitly described.

Dolinka's result demonstrates that
the finite basis properties of $S$ and $\mathbf{M}_n(S)$ do not always coincide.
This leads to a natural question:
does there exist a nonfinitely based ai-semiring $S$ for which $\mathbf{M}_n(S)$ is finitely based?

\subsection*{Acknowledgment}
The authors would like to thank Zidong Gao, Simin Lyu, Chenyu Yang, Ting Yu, and Mengya Yue
for their helpful discussions and contributions to this work.
We are also grateful to Professor Mikhail V. Volkov for his valuable comments and suggestions, which significantly improved the paper.

\bibliographystyle{amsplain}


\end{document}